\begin{document}
\title[Generalized solution]
{ Generalized solution to a system of conservation law which is not strictly hyperbolic}

\author[Sahoo]
{Manas R. Sahoo}

\address{Manas R. Sahoo \newline
TIFR Centre for Applicable Mathematics\\
Sharada Nagar, Chikkabommasandra, GKVK P.O.\\
Bangalore 560065, India}
\email{\newline sahoo@math.tifrbng.res.in}

\thanks{}
\subjclass[2000]{35A22, 35L65}
\keywords{generalized function; Shadow wave, Volpert product}

\begin{abstract}
In this paper we study a non strictly system of conservation law when viscosity is present and viscosity is zero,
 which is studied in \cite{jm1}. We show the existence and uniqueness of the solution in 
the space of generalized functions of Colombeau for the viscous problem 
and construct a solution to the inviscid system in the sense of association. Also we construct a solution using 
shadow wave approach \cite{ma} and Volpert product which was partly determined as vanishing viscosity limit in \cite{jm1}. 
\end{abstract}

\maketitle
\numberwithin{equation}{section}
\numberwithin{equation}{section}
\newtheorem{theorem}{Theorem}[section]
\newtheorem{remark}[theorem]{Remark}
\newtheorem{lemma}[theorem]{Lemma}
\newtheorem{definition}[theorem]{Defination}
\section{Introduction}

 Conservation laws come in applications are sometime not strictly hyperbolic. Classical theory of Lax\cite{la1} and 
Glimm \cite{17} does not apply in this case. In general such systems do not
admit distributional solutions. As product of distributions arises, one can not search solutions in the space of distributions.
The ideal space where one should search solutions is the Colombeau algebra of generalized functions. 
For detail, see  Colombeau \cite{co2,co3,co4} and Oberguggenberger \cite{ob1}.

Our interest is to study the inviscid partial differential equation 

\begin{equation}
\begin{aligned}
u_t + (\frac{u^2}{2})_x& =0,\,\,\,\
v_t + (uv)_x =0\\
w_t + (\frac{v^2}{2} + uw)_x& =0,\,\,\,\
z_t + (vw + uz)_x =0.
\end{aligned}
\label{e1.1}
\end{equation}
and viscous regularization of \eqref{e1.1} with coefficient of viscosity a generalized constant $\gamma$ ,
\begin{equation}
\begin{aligned}
u_t + (\frac{u^2}{2})_x& =\frac{\gamma}{2} u_{xx} ,\,\,\,\
v_t + (uv)_x =\frac{\gamma}{2} v_{xx}\\
w_t + (\frac{v^2}{2} + uw)_x& =\frac{\gamma}{2 }w_{xx},\,\,\,\
z_t + (vw + uz)_x=\frac{\gamma}{2 }z_{xx},
\end{aligned}
\label{e1.2}
\end{equation}

with initial conditions
\begin{equation} 
(u(x,0), v(x,0), w(x,0), z(x,0))= (u_0, v_0, w_0, z_0)
\label{e1.3}
\end{equation} 
are generalized functions of Colombeau.

The system which we have considered here is $n=4$ case of the following system.
\begin{equation}
(u_j)_t + \sum_{i=1}^{j}(\frac{u_i u_{j-i+1}}{2})_x = \frac{\epsilon}{2} (u_j)_{xx},\,\,j=1,2,...n.
 \label{e1.4}
\end{equation}
 where $\epsilon>0$ is a small parameter. A method to write down
an explicit formula for the solution of \eqref{e1.4}, with initial condition of the form
\begin{equation}
u_j(x,0)=u_{j0}(x), j=1,2,...,n
\label{e1.5}
\end{equation}
is given in \cite{j2}. It is well-known that the corresponding inviscid system
\begin{equation}
(u_j)_t + \sum_{i=1}^{j}(\frac{u_i u_{j-i+1}}{2})_x = 0,\,\,j=1,2,...n.
 \label{e1.6}
\end{equation}
does not have smooth global solution, even if the initial data \eqref{e1.5} is smooth; one has to seek
solution in a weak sense and weak solutions are not unique. Additional conditions are required to pick
the unique physical solution. Vanishing viscosity method is one of the ways to select the physical weak solution
of \eqref{e1.4}. That is, the solution of the inviscid system is constructed
as the limit $\epsilon$ goes to zero of solutions
$u_j^{\epsilon}(x,t)$ of \eqref{e1.4}, with suitable initial conditions.
This was successfully carried out for the cases $n=1$ and $n=2$ for general initial data.
For $n=3$, only partial results are available. In fact it was observed in \cite{j3} that the order of singularity
of vanishing viscosity limit of solutions of \eqref{e1.4} increases as $n$ increases. 

More precisely, when $n=1$, with $u=u_1$, \eqref{e1.4} become the celebrated Burgers equation,
\begin{equation*}
u_t + (\frac{u^2}{2})_x = \frac{\epsilon}{2} u_{xx},
\label{e1.7}
\end{equation*}
which was
explicitly solved for the initial value problem by Hopf \cite{h1} and Cole \cite{co1}.
Hopf \cite{h1} showed that the vanishing viscosity
limit of its solution with a given bounded measurable initial data is a bounded measurable
and locally a BV function which is the weak entropy solution
to the inviscid Burgers equation
\[
u_t + (\frac{u^2}{2})_x = 0\nonumber.
\]
Viscous and inviscid Burgers equation in Colombeau setting has studied in \cite{ob2}.
For $n=2$, with  $u=u_1,v=u_2$ the system \eqref{e1.4} becomes
\begin{equation*}
\begin{aligned}
u_t + (\frac{u^2}{2})_x &=\frac{\epsilon}{2} u_{xx},\,\,\,
v_t + (uv)_x &=\frac{\epsilon}{2} v_{xx},
\end{aligned}
\label{e1.8}
\end{equation*}
in $\{(x,t) : -\infty < x < \infty, t>0 \}$ and is well studied \cite{j1,j3}.
 The case $n=2$ is a one dimensional modelling for the large scale structure formation of universe, see \cite{w1}.

In \cite{j3}, the existence of the solution for the case $n=3$ in Colombeau class is shown for the bounded measurable initial
data and solution to the inviscid case is considered in the sense of association. However this method can not be carried out
for the case $n=4$ as the solution become more and more singular as $n$ increases.
In \cite{jm1}, the vanshing viscosity limit for the Riemann type initial data is studied for the case $n=4$. Calculation of 
vanishing viscosity limit for $n>2$ is an open question for general type initial data. Here our aim is to study viscous system
for the case $n=4$, i.e., \eqref{e1.2} and inviscid case \eqref{e1.1}  with initial datas are generalized functions of Colombeau.

The present paper is organised in the following way. In section 2, we recall some 
definition and results of algebra of generalized functions of Colombeau. In section 3, we discuss existence and uniqueness of the equation \eqref{e1.2}
when initial data belongs to the generalized space of Colombeau and viscosity parameter is a positive generalized constant.
In section 4, we construct a macroscopic solution to the problem \eqref{e1.1} when initial datas are bounded measurable functions.
In section 5, we construct a shadow wave solution and a solution using volpert product for Riemann type initial data when the first
component develops rarefaction. It is remarkable that the solution constructed using Volpert product agrees 
with the vanishing viscosity limit, obtained in the special case when the first component has rarefaction. We conclude in section 6
with some remarks.
  
 \section{The algebra of generalized functions of Colombeau}
In this section we introduce generalized functions of Colombeau in the domain $\Omega_T=\{(x,t): -\infty < x< \infty, 0< t < T\}$, 
for $T>0$, containing the space of bounded distributions, see \cite{co1,co2, co3,ob1}. Here we shall use a version which is sufficient for our purpose. This algebra we denote it by
$\mathcal{G}_{s,g}$, $s$ stands for `simplified'
 and $g$ stands for `global' .

Let
\begin{equation*}
 \mathcal{E}_{s,g}(\Omega_T) =\{(u^{\epsilon})_{0 \leq \epsilon \leq 1}:u^{\epsilon}\in C^{\infty}(\bar{\Omega}_T)\}
\end{equation*}

We define moderate elements of $\mathcal{E}_{s,g}(\Omega_T)$ as

\begin{equation*}
\begin{aligned}
 \mathcal{E}_{M,s,g}(\Omega_T) =\{&(u^{\epsilon})_{0 \leq \epsilon < 1}\in \mathcal{E}_{s,g}(\Omega_T)
: \forall (l,m)\in \mathbb{N}_0^2\,\,\, \exists\,\,\, \ p >0\\  &\textrm{such that}\,\,\, 
{|\frac{\partial ^j}{\partial t ^j}\frac{\partial ^l u}{\partial x^l}|}_{L^{\infty}(\Omega_T)}=O(\epsilon^{-p}) \}
\end{aligned}
\end{equation*}

and null elements by:

\begin{equation*}
\begin{aligned}
 \mathcal{N}_{s,g}(\Omega_T) =\{&(u^{\epsilon})_{0 \leq \epsilon < 1}\in \mathcal{E}_{s,g}(\Omega_T)
: \forall (l,m)\in \mathbb{N}_0^2\,\,\, \forall \,\,\, \ q >0\\  &\textrm{such that}\,\,\, 
{|\frac{\partial ^j}{\partial t ^j}\frac{\partial ^l u}{\partial x^l}|}_{L^{\infty}(\Omega_T)}=O(\epsilon^{q}) \}
\end{aligned}
\end{equation*}

Note that $\mathcal{E}_{M,s,g}(\Omega_T)$ is a differential algebra under component wise addition and 
multiplication. Also $\mathcal{N}_{s,g}(\Omega_T)$ is a differential ideal. So the quotient space 
\begin{equation*}
 \mathcal{G}_{s,g}(\Omega_T)=\frac{\mathcal{E}_{M,s,g}(\Omega_T)}{\mathcal{N}_{s,g}(\Omega_T)}
\end{equation*}
is also a differential algebra, addition and multiplication being defined at coset level.

The space of all bounded distributions on $\bar{\Omega}_T$ can be embedded in this version of algebra and under this embedding the 
product of two bounded smooth functions is preserved.

We say $u \in  \mathcal{G}_{s,g}(\Omega_T)$ admits a distribution $v\in D'(\Omega_T)$ in the sense of association or as macroscopic
aspect if for all test function $\phi \in D(\Omega_T)$ and a representative $(u^{\epsilon})_{\epsilon > 0}$(so for all representatives):
\begin{equation*}
 \displaystyle{\lim_{ \epsilon \rightarrow 0}} \int_{\Omega_T} u^{\epsilon} \phi(x,t)dx dt =\langle {u,\phi}\rangle
\end{equation*}

An element $u \in  \mathcal{G}_{s,g}(\Omega_T)$ is said to be bounded type if there exist a representative 
$(u^{\epsilon})_{\epsilon > 0}$ of $u$ which is bounded.

An element $u \in  \mathcal{G}_{s,g}(\Omega_T)$ is said to be a macroscopic solution or solution in the sense of \emph{association}
 to the the differential equation $L(u)=0$ if $L(u)$ has the macroscopic aspect $0$. 

A generalized function $u \in \mathcal{G}_{s,g}(\Omega_T)$ is said to be a generalized constant if it has a representative which is constant for each 
$\epsilon > 0$.

\section{ Existence and uniqueness}
In this section first we write explicit solutions of \eqref{e1.2} for a representative $\bar{\gamma}$ of $\gamma$.
That is,
\begin{equation}
\begin{aligned}
u_t + (\frac{u^2}{2})_x& =\frac{\bar{\gamma}(\epsilon)}{2} u_{xx},\,\,\,\
v_t + (uv)_x =\frac{\bar{\gamma}(\epsilon)}{2} v_{xx}\\
w_t + (\frac{v^2}{2} + uw)_x& =\frac{\bar{\gamma}(\epsilon)}{2 }w_{xx},\,\,\,\
z_t + (vw + uz)_x =\frac{\bar{\gamma}(\epsilon)}{2 }z_{xx}.
\end{aligned}
\label{e3.1}
\end{equation}
with initial data,
\begin{equation}
 (u(x,0),v(x,0),w(x,0),z(x,0))=(u_0^{\epsilon} (x),v_0^{\epsilon} (x), w_0^{\epsilon}(x),z^{\epsilon}_0(x))
\label{e3.2}
\end{equation}
where $u_0^{\epsilon} (x),v_0^{\epsilon} (x), w_0^{\epsilon}(x),z^{\epsilon}_0(x)$ are representative of $u_0, v_0, w_0, z_0$ 
respectiveily, for each fixed $\gamma(\epsilon)>0$, which can be found in \cite{jm1} with $\gamma(\epsilon)$ replaced by $\epsilon$ .
Before the statement of  the Theorem, we introduce some notations.
Starting from the initial data $u_0^\epsilon,v_0^\epsilon,w_0^\epsilon,z_0^\epsilon$, we define
\begin{equation}
\begin{aligned}
U_0^{\epsilon}(x)&=\int_0^x u_0^{\epsilon}(y) dy,\,\, V_0^\epsilon (x)=\int_0^x v_0^\epsilon(y) dy,\\
W_0^\epsilon(x)&=\int_0^xw_0^\epsilon(y) dy,\,\, Z_0^\epsilon(x)=\int_0^x z_0^\epsilon(y) dy,
\label{e3.3}
\end{aligned}
\end{equation}

and the functions $a,b,c,d$
\begin{equation}
\begin{aligned}
a(x,t)&=\frac{1}{\sqrt{2\pi t \bar{\gamma} (\epsilon)}}{\displaystyle{
  \int_{-\infty}^{+\infty} e^{-\frac{1}{\bar{\gamma} (\epsilon)}[{U}_0^\epsilon(y)
+ \frac{(x-y)^2}{2t}]}dy}},\\
b(x,t)&=-\frac{1}{\bar{\gamma} (\epsilon) \sqrt{2\pi t \bar{\gamma}(\epsilon)}}
 {\displaystyle{\int_{-\infty}^{+\infty}V_0^\epsilon(y)
e^{-\frac{1}{\bar{\gamma}(\epsilon)}[{U}_0^\epsilon(y)
+ \frac{(x-y)^2}{2t}]}dy}},\\
c(x,t)&=\frac{1}{\sqrt{2\pi t \bar{\gamma} (\epsilon)}}{\displaystyle{
 \int_{-\infty}^{+\infty}[\frac{{V_0^\epsilon(y)}^2}{2\bar{\gamma}(\epsilon)^2}-
 \frac{W_0^\epsilon(y)}{\bar{\gamma}(\epsilon)}]  e^{-\frac{1}{\bar{\gamma} (\epsilon)}[{U}_0^\epsilon(y)
+ \frac{(x-y)^2}{2t}]}dy}},\\
 d(x,t)&=\frac{1}{\sqrt{2\pi t \bar{\gamma} (\epsilon)}}{\displaystyle{
 \int_{-\infty}^{+\infty}[-\frac{Z_0^\epsilon(y)}
{\bar{\gamma}(\epsilon)}-\frac{{V_0^\epsilon(y)}^3}{6\bar{\gamma} (\epsilon) ^3}+
 \frac{V_0^\epsilon(y) W_0^\epsilon(y)}{\bar{\gamma} (\epsilon) ^2}]  e^{-\frac{1}{\bar{\gamma} (\epsilon)}[{U}_0^\epsilon(y)
+ \frac{(x-y)^2}{2t}]}dy}}.
\label{e3.4}
\end{aligned}
\end{equation}

\begin{theorem}
For the initial data  $u_0^\epsilon, v_0^\epsilon, w_0^\epsilon$ and $z_0^\epsilon$, there exits a
classical solution of \eqref{e3.1} - \eqref{e3.2}  is given by
\begin{equation}
\begin{aligned}
u^\epsilon &=-\bar{\gamma}(\epsilon)(\log(a))_x,
v^\epsilon =-\bar{\gamma}(\epsilon)(\frac{b}{a})_x,
w^\epsilon =-\bar{\gamma}(\epsilon)(\frac{c}{a}
              -\frac{b^2}{2 a^2})_x,\\
z^\epsilon&= -\bar{\gamma}(\epsilon)(\frac{1}{3}(\frac{b}{a})^3 - \frac{b c}{a^2}+\frac{d}{a})_x,
\label{e3.5}
\end{aligned}
\end{equation}
where $a,b,c$ and $d$ are given by \eqref{e3.4}.
\label{result1}
\end{theorem}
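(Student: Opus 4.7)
The plan is to exploit the triangular (Cole--Hopf cascade) structure of \eqref{e3.1}: once $u^\epsilon$ is known, the $v$-equation is linear in $v$; then the $w$-equation is linear in $w$; finally the $z$-equation is linear in $z$. Each of these equations can be linearized by a scalar substitution, and the ansatz \eqref{e3.5} is precisely the resulting collapsed formula. The backbone of the argument is the observation that $a,b,c,d$ from \eqref{e3.4} are Gaussian heat-kernel convolutions of $e^{-U_0^\epsilon/\bar\gamma}$, of $-(V_0^\epsilon/\bar\gamma)e^{-U_0^\epsilon/\bar\gamma}$, of $\bigl(V_0^{\epsilon\,2}/(2\bar\gamma^2)-W_0^\epsilon/\bar\gamma\bigr)e^{-U_0^\epsilon/\bar\gamma}$, and of $\bigl(-Z_0^\epsilon/\bar\gamma-V_0^{\epsilon\,3}/(6\bar\gamma^3)+V_0^\epsilon W_0^\epsilon/\bar\gamma^2\bigr)e^{-U_0^\epsilon/\bar\gamma}$, respectively. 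Consequently each of them satisfies the heat equation $\phi_t=\tfrac{\bar\gamma}{2}\phi_{xx}$ classically (with $a>0$ everywhere), and all ratios appearing in \eqref{e3.5} are smooth.

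The natural reduction is at the potential level. Set $U=-\bar\gamma\log a$, $V=-\bar\gamma\,b/a$, $W=-\bar\gamma(c/a-b^2/(2a^2))$, $Z=-\bar\gamma(d/a-bc/a^2+b^3/(3a^3))$, so that $U_x=u^\epsilon$, $V_x=v^\epsilon$, $W_x=w^\epsilon$, $Z_x=z^\epsilon$. I aim to establish the four potential identities $U_t+\tfrac12 U_x^2=\tfrac{\bar\gamma}{2}U_{xx}$, $V_t+U_xV_x=\tfrac{\bar\gamma}{2}V_{xx}$, $W_t+\tfrac12 V_x^2+U_xW_x=\tfrac{\bar\gamma}{2}W_{xx}$, and $Z_t+V_xW_x+U_xZ_x=\tfrac{\bar\gamma}{2}Z_{xx}$; differentiating these in $x$ gives exactly \eqref{e3.1}. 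The first identity is classical Hopf--Cole, and the second is a short computation: expand $V_t$ and $V_{xx}$, replace every $a_t,b_t$ by $\tfrac{\bar\gamma}{2}a_{xx},\tfrac{\bar\gamma}{2}b_{xx}$, and observe that the only surviving mixed term is precisely $U_xV_x=\bar\gamma^{2}(a_xb_x/a^2-ba_x^2/a^3)$.

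The third and fourth identities go through the same mechanical substitution, but the algebra is considerably longer, since $W$ and $Z$ are quadratic, respectively cubic, polynomials in $b/a$ with $c/a,d/a$ mixed in. This is where the main obstacle lies: one must keep track of many nonlinear cross-terms and verify that they cancel exactly against $U_xW_x+\tfrac12 V_x^2$ (resp.\ $U_xZ_x+V_xW_x$). A clean way to organize the bookkeeping is to group terms by the power of $a^{-1}$ and apply, repeatedly, the identity $(\phi/a)_{xx}=\phi_{xx}/a-2\phi_xa_x/a^2+2\phi a_x^2/a^3-\phi a_{xx}/a^2$ together with $\phi_t=\tfrac{\bar\gamma}{2}\phi_{xx}$ for every $\phi\in\{a,b,c,d\}$; the cancellations are then forced term by term.

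It remains to check the initial data. As $t\to 0^+$ the Gaussian factor in \eqref{e3.4} converges to $\delta(x-y)$, hence $a(x,0^+)=e^{-U_0^\epsilon/\bar\gamma}$, $b(x,0^+)=-(V_0^\epsilon/\bar\gamma)e^{-U_0^\epsilon/\bar\gamma}$, and analogously for $c,d$. Substituting these limits into \eqref{e3.5} one checks immediately that $u^\epsilon\to u_0^\epsilon$, that $b/a\to -V_0^\epsilon/\bar\gamma$ gives $v^\epsilon\to v_0^\epsilon$, and that $c/a-b^2/(2a^2)\to -W_0^\epsilon/\bar\gamma$ gives $w^\epsilon\to w_0^\epsilon$. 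For the $z$-component a small algebraic coincidence occurs: the $V_0^{\epsilon\,3}$ contributions combine with coefficient $-1/6+1/2-1/3=0$, the two $V_0^\epsilon W_0^\epsilon$ contributions cancel, and one is left with $d/a-bc/a^2+b^3/(3a^3)\to -Z_0^\epsilon/\bar\gamma$, whence $z^\epsilon\to z_0^\epsilon$.
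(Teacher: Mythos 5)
Your proposal is correct. The paper itself contains no proof of Theorem~\ref{result1} --- it only states the formula and defers to \cite{jm1} (with $\epsilon$ replaced by $\bar\gamma(\epsilon)$) --- and the argument you outline, namely the generalized Hopf--Cole linearization at the potential level ($a,b,c,d$ solving the heat equation $\phi_t=\tfrac{\bar\gamma}{2}\phi_{xx}$, the four potential identities whose $x$-derivatives give \eqref{e3.1}, and the $t\to 0^+$ heat-kernel limit with the coefficient cancellation $-\tfrac16+\tfrac12-\tfrac13=0$ recovering the initial data) is precisely the method of that cited reference, so your route coincides with the intended one.
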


\begin{theorem}
 Let $\gamma$ be a generalized constant with a representative $\bar{\gamma}$ satisfying : there exist
 $N\in \mathbb{N}$ and a $\eta$
such that $\bar{\gamma}({\epsilon})\geq \epsilon^N$ for each $0<\epsilon < \eta$. If the intial data
$(u_0, v_0, w_0, z_0) \in ( G_{s,g}(\mathbb{R}))^4$, then there exists a solution $(u, v, w, z) \in (\mathcal{G}_{s,g}(\Omega_T))^4$
of the equation \eqref{e1.2}-\eqref{e1.3} whose representative explicitly given by \eqref{e3.3}-\eqref{e3.5}.
\end{theorem}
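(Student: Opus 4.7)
For each fixed $\epsilon\in(0,1)$, Theorem \ref{result1} applied with the representative $\bar\gamma(\epsilon)>0$ and the initial representatives $u_0^\epsilon,v_0^\epsilon,w_0^\epsilon,z_0^\epsilon$ produces a smooth classical solution $(u^\epsilon,v^\epsilon,w^\epsilon,z^\epsilon)\in C^\infty(\bar\Omega_T)^4$ of \eqref{e3.1}--\eqref{e3.2} by the formulas \eqref{e3.3}--\eqref{e3.5}. Thus $(u^\epsilon,v^\epsilon,w^\epsilon,z^\epsilon)_\epsilon\in\mathcal{E}_{s,g}(\Omega_T)^4$ and satisfies the PDE system pointwise in $\epsilon$. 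The bulk of the proof is to verify that this family is \emph{moderate}, so that it descends to an element of $\mathcal{G}_{s,g}(\Omega_T)^4$, and that different choices of representatives of the initial data produce equivalent (modulo $\mathcal{N}_{s,g}$) representatives of the solution.

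Since $u^\epsilon=-\bar\gamma(\epsilon)\, a_x/a$ and $v^\epsilon,w^\epsilon,z^\epsilon$ are built from rational expressions in $a,b,c,d$ and their $x$-derivatives, moderation will follow from two ingredients: (i) polynomial-in-$\epsilon^{-1}$ $L^\infty$ bounds on $a,b,c,d$ together with all of their $(x,t)$-derivatives, and (ii) a polynomial-in-$\epsilon^{-1}$ \emph{lower} bound of the form $a(x,t)\geq \epsilon^{\,q}$ uniformly on $\bar\Omega_T$, for some $q\in\mathbb{N}$. For (i), the moderation of $u_0^\epsilon,\dots,z_0^\epsilon$ forces the primitives $U_0^\epsilon,\dots,Z_0^\epsilon$ to grow at most linearly with slope $O(\epsilon^{-p})$; the Gaussian factor $\exp\!\bigl(-(x-y)^2/(2t\bar\gamma(\epsilon))\bigr)$ then dominates this linear growth once $|y-x|$ is large enough, and the hypothesis $\bar\gamma(\epsilon)\geq\epsilon^N$ makes this crossover occur at a scale that is only polynomial in $\epsilon^{-1}$. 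Differentiating under the integral in $x$ or $t$ brings down only polynomial factors in $(x-y)/t$ and $1/(t\bar\gamma(\epsilon))$, all of which integrate to quantities of order $\epsilon^{-M(l,m)}$.

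For (ii), restrict the integration in the definition of $a$ to the $\epsilon$-dependent window $|y-x|\leq \bar\gamma(\epsilon)^{1/2}$. On this window $(x-y)^2/(2t\bar\gamma(\epsilon))$ is bounded by a constant over $t$, while $|U_0^\epsilon(y)|/\bar\gamma(\epsilon)=O(\epsilon^{-p-N/2})$, giving a lower bound of the form $a(x,t)\geq c\,\epsilon^{q}$ uniformly on $\bar\Omega_T$. Combining the upper bounds from (i) with this lower bound through the product, quotient and chain rules yields moderation of $u^\epsilon,v^\epsilon,w^\epsilon,z^\epsilon$ and of all their $(x,t)$-derivatives, placing $(u,v,w,z)$ in $\mathcal{G}_{s,g}(\Omega_T)^4$. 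Well-definedness of the equivalence class is then obtained by running the same estimates with $u_0^\epsilon,\dots,z_0^\epsilon$ replaced by a null perturbation: the exponential's local Lipschitz property, together with the lower bound on $a$, turns a null perturbation of the data into a null perturbation of $a,b,c,d$ and hence of $(u^\epsilon,v^\epsilon,w^\epsilon,z^\epsilon)$.

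\textbf{Main obstacle.} The only subtle point is the lower bound on $a$. Since $U_0^\epsilon/\bar\gamma(\epsilon)$ can be as large as $\epsilon^{-p-N}|y|$, a generic representative could in principle make $a$ exponentially small in $\epsilon^{-1}$, which would destroy moderation of the ratios $a_x/a$, $b/a$, etc. The hypothesis $\bar\gamma(\epsilon)\geq \epsilon^N$ is exactly what is needed for the localization window $|y-x|\leq\bar\gamma(\epsilon)^{1/2}$ to give a polynomial-in-$\epsilon^{-1}$ lower bound; once this obstacle is overcome, everything else reduces to bookkeeping with the quotient rule on the explicit formulas \eqref{e3.4}--\eqref{e3.5}.
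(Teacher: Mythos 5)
Your overall plan—apply Theorem \ref{result1} for each $\epsilon$, then verify moderateness of the family given by \eqref{e3.3}--\eqref{e3.5}—matches the paper's, but the estimation strategy you propose breaks at exactly the point you call the ``main obstacle,'' and the claimed fix does not work. The uniform lower bound $a(x,t)\geq c\,\epsilon^{q}$ on $\bar\Omega_T$ is false for generic moderate data, because $U_0^{\epsilon}$ is a primitive of $u_0^{\epsilon}$ and grows linearly in $y$; moderateness of $u_0^{\epsilon}$ in $L^{\infty}$ gives no bound on $\|U_0^{\epsilon}\|_{L^\infty}$. Concretely, for $u_0\equiv 1$ one has $U_0(y)=y$ and, completing the square,
\begin{equation*}
a(x,t)=\frac{1}{\sqrt{2\pi t\bar\gamma(\epsilon)}}\int_{-\infty}^{+\infty}
e^{-\frac{1}{\bar\gamma(\epsilon)}\left[y+\frac{(x-y)^2}{2t}\right]}dy
= e^{-\frac{x-t/2}{\bar\gamma(\epsilon)}},
\end{equation*}
which is exponentially small in $1/\bar\gamma(\epsilon)$ as $x\to+\infty$ and exponentially large as $x\to-\infty$. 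So neither your upper bounds in (i) on $a,b,c,d$ themselves nor your lower bound in (ii) can hold uniformly over $x\in\mathbb{R}$, and the localization window $|y-x|\leq\bar\gamma(\epsilon)^{1/2}$ does not help: on that window $U_0^{\epsilon}(y)/\bar\gamma(\epsilon)\approx U_0^{\epsilon}(x)/\bar\gamma(\epsilon)$, which is of size $|x|\,\epsilon^{-p}/\bar\gamma(\epsilon)$ and unbounded in $x$. Your estimate $|U_0^{\epsilon}(y)|/\bar\gamma(\epsilon)=O(\epsilon^{-p-N/2})$ silently assumes the primitive is bounded, which is not part of the hypotheses.

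The way out—and this is the paper's actual argument—is never to separate numerator and denominator. Every quantity in \eqref{e3.5} reduces to ratios $F_1/F_2$ in which the \emph{same} positive weight $e^{-\frac{1}{\bar\gamma(\epsilon)}[U_0^{\epsilon}(y)+(x-y)^2/(2t)]}$ appears in both integrals, so the ratio is a weighted average and is bounded by the supremum of the prefactor alone; the exponentially bad behaviour of $a$ cancels identically (in the example above, $a_x/a=-1/\bar\gamma(\epsilon)$ even though $a$ itself is not moderate). To keep this structure under differentiation, the paper first changes variables $y=x-z$ so that $\partial_x$ falls on $k_1^{\epsilon}(x-z,t)$ and $U_0^{\epsilon}(x-z)$, producing a polynomial in moderate quantities and in powers of $1/\bar\gamma(\epsilon)\leq\epsilon^{-N}$ times the original weight; the hypothesis $\bar\gamma(\epsilon)\geq\epsilon^{N}$ enters only there, not through any lower bound on $a$. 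As written, your proof has a genuine gap at its central step, and patching it essentially forces you onto this weighted-average route.
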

\begin{proof}
 We follow Joseph \cite{j3},  Biagioni and Oberguggenberger \cite{ob2}. We show using representation formula \eqref{e3.5} that
$u^\epsilon ,v^\epsilon,w^\epsilon$ and $z^\epsilon$ satisfy moderate estimates.
 
 So it is enough to show  $\frac{a_x}{a},\frac{b}{a},\frac{c}{a}$ and $\frac{d}{a}$ satisfy moderate estimates.
 
 From the identities \eqref{e3.3} - \eqref{e3.5} it is enough to show the element of the form
 \begin{equation}
 \frac{ \int_{-\infty}^{\infty} k_1 ^{\epsilon}(x,t)
 e^{-\frac{1}{\bar{\gamma}(\epsilon)}[U_0 ^{\epsilon}(y)+\frac{(x-y)^2}{2t}]}dy}
 {\int_{-\infty}^{\infty} e^{-\frac{1}{\bar{\gamma}(\epsilon)}[U_0 ^{\epsilon}(y)+\frac{(x-y)^2}{2t}]}dy}
\in \mathcal{E}_{M,s,g}(\Omega_T)
\end{equation}

where $k_1 ^{\epsilon}$ is a representative of an element of Colombeau class $k_1 \in \mathcal{G}_{s,g}(\Omega_T)$ and 
$U_0 ^{\epsilon}$ is a representative of $U_0 \in \mathcal {\mathcal{G}}_{s,g}(\mathbb{R})$.

Let's denote
\begin{equation}
\begin{aligned}
F_1 (\epsilon, x, t)&=\int_{-\infty}^{\infty} k_1 ^{\epsilon}(x,t)
 e^{-\frac{1}{\bar{\gamma}(\epsilon)}[U_0 ^{\epsilon}(y)+\frac{(x-y)^2}{2t}]}dy\\
 F_2(\epsilon, x, t)&=\int_{-\infty}^{\infty} k_1 ^{\epsilon}(x,t)
 e^{-\frac{1}{\bar{\gamma}(\epsilon)}[U_0 ^{\epsilon}(y)+\frac{(x-y)^2}{2t}]}dy
 \end{aligned}
 \end{equation}
 Then 
 
 \begin{equation}
 \frac{F_1(\epsilon,x, t)}{F_2(\epsilon,x,t)}=\frac{ \int_{-\infty}^{\infty} k_1 ^{\epsilon}(x,t)
 e^{-\frac{1}{\bar{\gamma}(\epsilon)}[U_0 ^{\epsilon}(y)+\frac{(x-y)^2}{2t}]}dy}
 {\int_{-\infty}^{\infty} e^{-\frac{1}{\bar{\gamma}(\epsilon)}[U_0 ^{\epsilon}(y)+\frac{(x-y)^2}{2t}]}dy}
\end{equation}

One can easily show, 
${\partial_x}^{k}(\frac{F_1(\epsilon,x,t)}{F_2(\epsilon,x,t)}$ is the finite linear combinations
of finite products of the elements having the form 
\begin{equation}
 \frac{{\partial_x}^{j_1}F_1(\epsilon,x,t)}{F_2(\epsilon,x,t)}\,\,\, \textnormal{and}\,\,\,
 \frac{{\partial_x}^{j_2}F_2 (\epsilon,x,t)}{F_2 (\epsilon,x,t)},
\end{equation}
where $j_1$ and $j_2$ are nonnegative integers.
Now using change of variable we get
 \begin{equation}
     \begin{aligned}
     {\partial_x}^{j_1}F_1(\epsilon,x,t)&={\partial_x}^{j_1}
\int_{-\infty}^{\infty} k_1 ^{\epsilon}(x,t)
 e^{-\frac{1}{\bar{\gamma}(\epsilon)}[U_0 ^{\epsilon}(y)+\frac{(x-y)^2}{2t}]}dy\\
&= {\partial_x}^{j_1}\int_{-\infty}^{\infty} k_1 ^{\epsilon}(x-z,t)
 e^{-\frac{1}{\bar{\gamma}(\epsilon)}[U_0 ^{\epsilon}(x-z)+\frac{z^2}{2t}]}dz\\
&= \int_{-\infty}^{\infty}P( \bar{\gamma}(\epsilon), {\partial_x}^{1}k_1 ^{\epsilon}(x-z,t),...
{\partial_x}^{j_1}k_1 ^{\epsilon}(x-z,t),{\partial_x}^{1}U_0 ^{\epsilon}(x-z),...\\&{\partial_x}^{j_1}U_0 ^{\epsilon}(x-z))
 e^{-\frac{1}{\bar{\gamma}(\epsilon)}[U_0 ^{\epsilon}(x-z)+\frac{z^2}{2t}]}dz,\\
     \end{aligned}
    \end{equation}
where P is a polynomial of $2J_1+1$ variables. Since the variables satisfy moderate esimates and assumption on $\bar{\gamma}$
imlies $\frac{{\partial_x}^{j_1}F_1(\epsilon,x,t)}{F_2(\epsilon,x,t)})$ satisfy modertate estimate. Similarly one can show
$\frac{{\partial_x}^{j_1}F_2(\epsilon,x,t)}{F_2(\epsilon,x,t)}$  also satisfy moderate estimates. 

So if we take the class $u,v,w,z$ in colombeau space whose representatives are respectiveily 
$u^\epsilon ,v^\epsilon,w^\epsilon,z^\epsilon$, then $u,v,w,z$ satisfy \eqref{e1.2}- \eqref{e1.3}.  
This completes the proof of the theorem.
\end{proof}
Now we show the uniqueness for the Cauchy problem for the equation \eqref{e1.2}. For that we use a modified verson of 
Gronowall inequality from \cite{ob2}.

\begin{lemma}
 Let $u$ be a nonnegative, continuous function on $[0, \infty)$ and assume that
\begin{equation*}
 u(t) \leq a+b\int_{0}^{t}\frac{u(t_1)}{\sqrt{t-t_1}}dt_1 
\end{equation*}
for some constant $a,b \geq 0$ and every $t \geq 0$. Then 
\begin{equation*}
 u(t) \leq a(1+2b\sqrt{t}) \exp(\pi b^2 t)
\end{equation*}
\end{lemma}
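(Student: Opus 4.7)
The plan is classical Picard iteration combined with a resolvent-series computation. Introduce the positivity-preserving integral operator
\begin{equation*}
(Kf)(t) = b\int_0^t \frac{f(s)}{\sqrt{t-s}}\,ds,
\end{equation*}
so the hypothesis reads $u \le a + Ku$ pointwise on $[0,\infty)$. A direct induction, substituting the inequality into itself and using monotonicity of $K$ on nonnegative functions, yields $u(t) \le \sum_{n=0}^{N} K^{n}(a)(t) + K^{N+1}u(t)$ for every $N \ge 0$; the proof then reduces to summing $\sum_n K^n(a)$ and controlling the remainder.

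The heart of the argument is an explicit evaluation of $K^n(a)$ via the Beta-function identity $\int_0^t s^{\alpha-1}(t-s)^{-1/2}\,ds = t^{\alpha-1/2}\,\Gamma(\alpha)\Gamma(1/2)/\Gamma(\alpha+1/2)$, equivalently the convolution semigroup property $\phi_\alpha \ast \phi_\beta = \phi_{\alpha+\beta}$ for $\phi_\alpha(t) = t^{\alpha-1}/\Gamma(\alpha)$. An easy induction gives
\begin{equation*}
K^n(a)(t) = \frac{a\,(b\sqrt{\pi})^n\,t^{n/2}}{\Gamma(n/2+1)}.
\end{equation*}
I split the sum into even and odd indices. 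The even terms collapse to
\begin{equation*}
\sum_{k=0}^{\infty} K^{2k}(a)(t) = a\sum_{k=0}^{\infty} \frac{(\pi b^2 t)^k}{k!} = a\,e^{\pi b^2 t}.
\end{equation*}
For the odd terms, I will use the elementary bound $\Gamma(k+3/2) \ge (\sqrt{\pi}/2)\,k!$, which reduces to the central-binomial estimate $\binom{2k}{k}(2k+1) \ge 4^k$ (itself immediate from the fact that $\binom{2k}{k}$ is the largest of the $2k+1$ coefficients summing to $4^k$). This gives $K^{2k+1}(a)(t) \le 2ab\sqrt{t}\,(\pi b^2 t)^k/k!$, hence $\sum_{k\ge 0} K^{2k+1}(a)(t) \le 2ab\sqrt{t}\,e^{\pi b^2 t}$. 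Adding the two bounds yields exactly $a(1+2b\sqrt{t})e^{\pi b^2 t}$.

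To finish, I need to show $K^{N+1}u(t) \to 0$ as $N \to \infty$, uniformly on each compact subinterval of $[0,\infty)$. Since $u$ is continuous, it is bounded by some $M_T$ on $[0,T]$, and the same closed-form formula applied to the constant $M_T$ yields $K^{N+1}u(t) \le M_T\,(b\sqrt{\pi T})^{N+1}/\Gamma((N+3)/2)$ for $t \in [0,T]$, which tends to zero by the rapid growth of the Gamma function. The only genuinely non-routine step is verifying the half-integer Gamma inequality and packaging the even/odd split cleanly; everything else reduces to standard monotone iteration and termwise summation of a dominated series.
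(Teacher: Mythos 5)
Your proof is correct: the monotone iteration $u \le \sum_{n=0}^{N} K^{n}(a) + K^{N+1}u$, the closed form $K^{n}(a)(t) = a(b\sqrt{\pi})^{n}t^{n/2}/\Gamma(n/2+1)$ obtained from the Beta integral, the estimate $\Gamma(k+\tfrac32) \ge (\sqrt{\pi}/2)\,k!$ (equivalently $(2k+1)\binom{2k}{k}\ge 4^{k}$), and the vanishing of the remainder term all check out, and the even/odd split reproduces exactly the stated constant $a(1+2b\sqrt{t})e^{\pi b^{2}t}$. Note, however, that the paper does not prove this lemma at all: it is quoted from Biagioni--Oberguggenberger \cite{ob2}, so there is no internal proof to compare against. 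The standard argument behind that citation is shorter than yours: substitute the hypothesis into itself once, interchange the order of integration, and use $\int_{t_2}^{t}\frac{dt_1}{\sqrt{(t-t_1)(t_1-t_2)}} = B(\tfrac12,\tfrac12) = \pi$ to arrive at $u(t) \le a(1+2b\sqrt{t}) + \pi b^{2}\int_{0}^{t}u(t_2)\,dt_2$, after which the classical Gronwall inequality with the nondecreasing inhomogeneity $a(1+2b\sqrt{t})$ gives the claim. That route needs a single Beta evaluation and no Gamma-function estimates; your Neumann-series route costs more bookkeeping but is self-contained, effectively computes the full resolvent kernel (a Mittag--Leffler function of order $\tfrac12$), and makes transparent where the constants $2b\sqrt{t}$ and $\pi b^{2}$ originate.
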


\begin{theorem}
 Let $\gamma$ be a generalized constant with a representative $\bar{\gamma}$ satisfying : there exist a $\eta$
such that $\bar{\gamma}(\epsilon)\log(\frac{1}{\epsilon})\geq 1$ for each $0<\epsilon < \eta$. Also assume $u$ is of bounded type.
Then for each $T>0$, the solutions  $u,v,w,z  \in G_{s,g}(R \times [0, T])$ of \eqref{e1.2}-\eqref{e1.3} are unique.
\end{theorem}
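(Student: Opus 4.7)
The plan is to compare two candidate solutions and show that their representatives differ by a null element in each component. Fix representatives $(u_i^\epsilon,v_i^\epsilon,w_i^\epsilon,z_i^\epsilon)$ of two Colombeau solutions ($i=1,2$) with the same initial data, and set $\tilde u = u_1^\epsilon - u_2^\epsilon$ and analogously $\tilde v,\tilde w,\tilde z$. Subtracting the two instances of \eqref{e1.2} and using identities such as $(u_1^2-u_2^2)/2 = (u_1+u_2)\tilde u/2$ on each flux, one gets a linear parabolic system for $(\tilde u,\tilde v,\tilde w,\tilde z)$ driven by null initial data and by null source terms (arising because the representatives satisfy the PDE only modulo $\mathcal{N}_{s,g}$), whose coefficients are products of the moderate $u_i^\epsilon,v_i^\epsilon,w_i^\epsilon,z_i^\epsilon$.

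First I would treat the Burgers component alone. Since $u$ is of bounded type, both $u_1^\epsilon$ and $u_2^\epsilon$ can be taken uniformly bounded by some $M$ for small $\epsilon$. Inverting the heat operator $\partial_t - (\bar\gamma/2)\partial_{xx}$ by Duhamel with the kernel $K_{t}^{\bar\gamma}(x)=(2\pi\bar\gamma t)^{-1/2}\exp(-x^2/(2\bar\gamma t))$ and moving the spatial derivative onto the kernel via the standard bound $\|\partial_x K_{t-s}^{\bar\gamma}\|_{L^1}=\sqrt{2/(\pi\bar\gamma(t-s))}$, one arrives at a singular integral inequality
\begin{equation*}
\|\tilde u(\cdot,t)\|_\infty \leq a_\epsilon + b_\epsilon \int_0^t \frac{\|\tilde u(\cdot,s)\|_\infty}{\sqrt{t-s}}\,ds,
\end{equation*}
where $a_\epsilon$ absorbs the null initial data and the null source contribution and $b_\epsilon = O(\bar\gamma(\epsilon)^{-1/2})$. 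The preceding lemma then gives $\|\tilde u(\cdot,t)\|_\infty \leq a_\epsilon(1+2b_\epsilon\sqrt t)\exp(\pi b_\epsilon^2 t)$.

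The main obstacle is the factor $\exp(\pi b_\epsilon^2 t) = \exp(O(1/\bar\gamma(\epsilon)))$, which is a priori exponentially large in $1/\bar\gamma$. This is precisely where the hypothesis $\bar\gamma(\epsilon)\log(1/\epsilon)\geq 1$ is used: it yields $1/\bar\gamma(\epsilon)\leq \log(1/\epsilon)$, whence $\exp(C/\bar\gamma(\epsilon))\leq \epsilon^{-C}$ is only of moderate growth. Since $a_\epsilon = O(\epsilon^q)$ for every $q>0$, one concludes $\|\tilde u(\cdot,t)\|_\infty = O(\epsilon^{q-C})$ for every $q$, so $\tilde u$ is null in $L^\infty$.

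For the remaining components I would bootstrap. With $\tilde u$ already null in $L^\infty$, the cross term $(\tilde u v_2)_x$ in the equation for $\tilde v$ is null (products of null and moderate elements stay null, and the spatial derivative is absorbed by $\partial_x K^{\bar\gamma}_{t-s}$ in the Duhamel formula), so the equation for $\tilde v$ has the same structure $\tilde v_t + (u_1\tilde v)_x = (\bar\gamma/2)\tilde v_{xx} + (\text{null})$ with the bounded coefficient $u_1$; the same Duhamel--Gronwall estimate then gives $\tilde v$ null, and successively $\tilde w$ and $\tilde z$ by the same reasoning applied to their linear equations. The null property for all space and time derivatives finally follows by differentiating the integral representations (each spatial derivative on the kernel contributes another factor of $(\bar\gamma t)^{-1/2}$, still moderate under the hypothesis on $\bar\gamma$) and by using the PDEs to rewrite time derivatives as spatial ones. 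This places $(\tilde u,\tilde v,\tilde w,\tilde z)$ in $(\mathcal{N}_{s,g}(\Omega_T))^4$, giving the desired uniqueness.
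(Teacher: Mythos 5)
Your proposal is correct and follows essentially the same route as the paper: subtract representatives, invert the heat operator by Duhamel, use the $L^1$ bound $\|\partial_x K^{\bar\gamma}_{t-s}\|_{L^1}=O((\bar\gamma(t-s))^{-1/2})$ to get the singular Gronwall inequality, apply the stated lemma, and use $\bar\gamma(\epsilon)\log(1/\epsilon)\geq 1$ to turn $\exp(C/\bar\gamma(\epsilon))$ into the moderate factor $\epsilon^{-C}$, then bootstrap through $v$, $w$, $z$. The only cosmetic differences are that the paper cites Biagioni--Oberguggenberger for the $u$ component rather than redoing it, and concludes nullity of all derivatives by invoking the standard result that a zeroth-order null estimate together with moderateness of derivatives implies membership in $\mathcal{N}_{s,g}$, rather than differentiating the integral representation as you suggest.
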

\begin{proof}
 Uniqueness for $u$ is already known, see Oberguggenberger \cite{ob2}. 
 Let  $\bar{u}$ is a representative of $u$ which is bounded.

Let $v_1$ and $v_2$ be solutions in the colombeau class with representatives $\bar{v_1}$ and $\bar{v_2}$ 
respectively
Then we will get,
\begin{equation}
\begin{aligned}
 \{(\bar{v_1}-\bar{v_2})_t +(\bar{u}(\bar{v_1}-\bar{v_2}))_x+N\}(\epsilon,x,t)&= 
\bar {\gamma}(\epsilon)(\bar{v_1}-\bar{\rho_2})_{xx}(\epsilon, x,t)\\
(\bar{v_1}-\bar{v_2})&=n(\epsilon,x).
\end{aligned}
\end{equation}
Using Duhamel principle, we have
\begin{equation}
\begin{aligned}
 (\bar{v_1}-\bar{v_2})(\epsilon,x,t) &=\int_{-\infty}^{\infty}G(\epsilon,x,t,x_1,0) n(\epsilon,x_1)dx_1\\
&\int_{0}^{t}\int_{-\infty}^{\infty}G(\epsilon,x,t,x_1,t_1)N(\epsilon,x_1,t_1)dx_1 dt_1\\
 &\int_{0}^{t}\int_{-\infty}^{\infty}\frac{\partial G}{\partial x_1}(\epsilon,x,t,x_1,t_1)
 (\bar{u}(\bar{v_1}-\bar{v_2}))(\epsilon,x_1,t_1)dx_1 dt_1
\end{aligned}
\end{equation}

Since 
$\int_{-\infty}^{\infty}G(\epsilon,x,t,x_1,t_1)dx_1=1$ and 
$\int_{-\infty}^{\infty}\mid\frac{\partial G}{\partial x_1}\mid dx_1= \frac{1}{\sqrt{\pi(t-t_1)\bar{\gamma}(\epsilon)}}$

Thus we obtain the estimate,

\begin{equation}
\begin{aligned}
\displaystyle{\sup_{x}}  \mid{(\bar{v_1}-\bar{v_2})(\epsilon,x,t)}\mid  &\leq 
\displaystyle{\sup_{x_1}}{\mid{n(\epsilon,x_1)}}\mid+
\int_{0}^{t}\displaystyle{\sup_{x_1, t_1}}{ \mid{N(\epsilon,x_1,t_1)}\mid}\\
 &+\int_{0}^{t}\frac{1}{\sqrt{\pi(t-t_1)\bar{\gamma}(\epsilon)}}
\displaystyle{\sup_{x_1}} 
\mid {(\bar{v_1}-\bar{v_2})(\epsilon,x,t)}\mid\\
 &.\displaystyle{\sup_{x,t}} \mid{\bar{u}(\epsilon,x,t)}\mid
 \end{aligned}
\end{equation}

So by lemma $(3.3)$, 

\begin{equation}
\displaystyle{\sup_{(x,t)\in \mathbb{R}\times [0,T]}}  \mid(\bar{v_1}-\bar{v_2})(\epsilon,x,t)\mid  \leq 
a(1+2b\sqrt{T})e^{\pi b^2 T}
\end{equation}
where,
\begin{equation*}
a=\displaystyle{\sup_{x_1}}{\mid{n(\epsilon,x_1)}}\mid+
T\displaystyle{\sup_{(x_1, t_1)\in \mathbb{R} \times [0,T] }} \mid{N(\epsilon,x_1,t_1)}\mid
 \end{equation*} 
 and 

\begin{equation*}
b=\frac{1}{2 \sqrt{\pi \gamma (\epsilon)}}\displaystyle{\sup_{(x_1, t_1)\in \mathbb{R} \times [0,T] }
}\mid\bar{u}(\epsilon,x_1,t_1)\mid
\end{equation*} 
From the assumption of $\gamma (\epsilon)$ we get the condition:
\begin{equation*}
 \displaystyle{\sup_{(x,t)\in \Omega_T}}  \mid{(\bar{v_1}-\bar{v_2})(\epsilon,x,t)}\mid =O(\epsilon^m)
\end{equation*}
for all non negative integers $m$. Since $(\bar{v_1}-\bar{v_2})$ satisfy the above estimate and moderate estimate,
so it is a null element by theorem $(1.2.3)$ of \cite{ob3}

%

 The solution for the component $v$ is unique. Let it has a representative $\bar{v}$. 
 Let $w_1$ and $w_2$ be two solutions for the component 
having representatives $\bar{w_1}$ and $\bar{w_2}$ respectiveily.

 Then we have,
\begin{equation}
 \begin{aligned}
\frac{\partial}{\partial t}\bar{w_1}+ 
\frac{\partial}{\partial x}(\frac{\bar{v}^2}{2}+\bar{u}\bar{w_1})+n_1(\epsilon,x,t)&=
\frac{\gamma(\epsilon)}{2}\frac{\partial^2}{\partial x^2}\bar{w_1}\\
\frac{\partial}{\partial t}\bar{w_2}+ 
\frac{\partial}{\partial x}(\frac{\bar{v}^2}{2}+\bar{u}\bar{w_2})+n_1(\epsilon,x,t)&=
\frac{\gamma(\epsilon)}{2}\frac{\partial^2}{\partial x^2}\bar{w_2},
\end{aligned}
\label{e3.16}
\end{equation}
and 
\begin{equation*}
 (\bar{w_1}-\bar{w_2})=n_3 (\epsilon,x,t)
\end{equation*}

%

where $n_1(\epsilon,x,t)$, $n_2(\epsilon,x,t)$ and $n_3(\epsilon,x,t)$ are null elements in Colombeau space.

Now subtracting the second equation from first in the equation \eqref{e3.16}, we get

\begin{equation}
 \begin{aligned}
\frac{\partial}{\partial t}(\bar{w_1}-\bar{w_2})+ 
+\frac{\partial}{\partial x}\big(\bar{u}(\bar{w_1}-\bar{w_2})\big)+n(\epsilon,x,t)=
\frac{\gamma(\epsilon)}{2}\frac{\partial^2}{\partial x^2}(\bar{w_1}-\bar{w_2}),
\end{aligned}
\end{equation}
where $n(\epsilon,x,t)= n_1(\epsilon,x,t)-n_2(\epsilon,x,t)$ is a null element in Colombeau algebra.
So the analysis similar to above, and condition on $\bar{\gamma}(\epsilon)$ gives the uniqueness for the component $w$.

Uniqueness for the component $z$ can be similarly handled as for the component $w$. 
This completes the proof of the theorem.
\end{proof}

\section{Macroscopic solution of the system}

In this section we show the existence of macroscopic solution of the equation \eqref{e1.1} when the initial data 
$\big(u(x,0), v(x,0), w(x,0), z(x,0)\big)=\big(u_0 (x),v_0 (x),w_0 (x),z_0 (x)\big)$ are bounded measurable functions.
For the case $n=3$, is already considered by Joseph \cite{j3}. That method can not be applied for $z$-component. Here we take a
slower growth order on $u$ to get the required estimates. Now consider the system
\begin{equation}
\begin{aligned}
u_t + (\frac{u^2}{2})_x&=\frac{\beta(\epsilon)}{2} u_{xx},\,\,\,\
v_t + (uv)_x =\frac{\beta(\epsilon)}{2} v_{xx},\\
w_t + (\frac{v^2}{2} + uw)_x &=\frac{\beta(\epsilon)}{2 }w_{xx},\,\,\,\
z_t + (u z + v w)_x = \epsilon z_{xx}.
\end{aligned}
\label{e5.1}
\end{equation}
with initial data
\begin{equation}
 \big(u^{\epsilon}(x,0), v^{\epsilon}(x,0), w^{\epsilon}(x,0), z^{\epsilon}(x,0)\big)=
\big(u_0^{\beta(\epsilon)} (x),v_0^{\beta(\epsilon)}(x),w_0^{\beta(\epsilon)}(x),z_0^{\epsilon} (x)\big),
\end{equation}

where $A^{\epsilon}(x)= A*\eta^\epsilon(x)$, where $\eta^{\epsilon}$ is the usual Friedrich mollifier.
With the notation above, we have following theorem.

\begin{theorem}
Let $u_0,v_0,w_0$ and $z_0$ are bounded measurable functions.Then there exists $\beta(\epsilon)$ such that
  $( u^\epsilon, v^\epsilon, w^\epsilon, z^\epsilon)$ of \eqref{e5.1} satisfy moderate estimates and 
$(u,v,w,z)$ $\in$ $\mathcal{G}_{s,g}(\Omega_T)^4$ correspnding to the representative $( u^\epsilon, v^\epsilon, w^\epsilon, z^\epsilon)$
is a macroscopic solution to 
the system \eqref{e1.1} with initial data \eqref{e1.3}.
\end{theorem}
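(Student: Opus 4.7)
The plan is to build $(u^\epsilon,v^\epsilon,w^\epsilon)$ from the explicit Hopf--Cole formulas of Theorem~\ref{result1} with viscosity $\bar\gamma(\epsilon)=\beta(\epsilon)$ and mollified initial data $u_0^{\beta(\epsilon)}=u_0*\eta^{\beta(\epsilon)}$, etc., and to handle $z^\epsilon$ separately since the viscosity in the $z$-equation of \eqref{e5.1} is $\epsilon$, not $\beta(\epsilon)$. Given $u^\epsilon,v^\epsilon,w^\epsilon$, the $z$-equation reduces to the linear parabolic equation $z_t+u^\epsilon z_x+u^\epsilon_x z=\epsilon z_{xx}-(v^\epsilon w^\epsilon)_x$, which can be solved by a Duhamel formula against the fundamental solution of $\partial_t-\epsilon\partial_{xx}+u^\epsilon\partial_x$.

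For moderate estimates I would inspect the representation \eqref{e3.5}: the ratios $a_x/a,\,b/a,\,c/a,\,d/a$ carry prefactors $\beta(\epsilon)^{-1},\beta(\epsilon)^{-1},\beta(\epsilon)^{-2},\beta(\epsilon)^{-3}$ coming from the exponents and from $V_0^\epsilon,W_0^\epsilon,Z_0^\epsilon$, and each spatial or temporal derivative adds a further negative power of $\beta(\epsilon)$ together with a derivative of the initial data. Mollification of bounded measurable data gives $\|\partial_x^k A_0^{\beta(\epsilon)}\|_\infty=O(\beta(\epsilon)^{-k})$, so every derivative of $(u^\epsilon,v^\epsilon,w^\epsilon)$ is bounded by a polynomial in $\beta(\epsilon)^{-1}$, and the Duhamel representation together with heat-kernel estimates shows the same for $z^\epsilon$ (with an extra polynomial factor in $\epsilon^{-1}$). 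Choosing $\beta(\epsilon)$ to go to $0$ sufficiently slowly---for instance $\beta(\epsilon)=1/\log(1/\epsilon)$, so that $\beta(\epsilon)^{-p}$ is subpolynomial in $\epsilon^{-1}$ for every $p$---then forces all of these bounds to be $O(\epsilon^{-q})$ for some $q$, yielding the required moderate estimates.

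For the macroscopic aspect I would test each equation in \eqref{e5.1} against $\phi\in\mathcal D(\Omega_T)$ and integrate by parts twice on the viscous side. The component $u^\epsilon$ is uniformly bounded in $L^\infty$ by the Hopf maximum principle, so $\tfrac{\beta(\epsilon)}{2}\int u^\epsilon\phi_{xx}\,dx\,dt\to 0$. For $v^\epsilon,w^\epsilon,z^\epsilon$ the $L^\infty$ norms may blow up, but the Hopf--Cole formulas provide uniform bounds on the primitives $V^\epsilon,W^\epsilon,Z^\epsilon$ in $BV_{\mathrm{loc}}$, equivalently uniform bounds on $v^\epsilon,w^\epsilon,z^\epsilon$ as locally finite measures; multiplying by $\beta(\epsilon)$ or $\epsilon$ then kills the viscous terms. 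A Helly-type compactness argument (as in Joseph \cite{j3}) extracts weak-$*$ limits $v,w,z$, and the flux integrals $\int u^\epsilon v^\epsilon\phi_x,\,\int (v^\epsilon)^2\phi_x,\,\int u^\epsilon w^\epsilon\phi_x,\,\int v^\epsilon w^\epsilon\phi_x,\,\int u^\epsilon z^\epsilon\phi_x$ are shown to converge to the corresponding distributional pairings by combining the $L^1_{\mathrm{loc}}$ convergence of $u^\epsilon$ with the weak-$*$ measure convergence of the higher components.

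The main obstacle is precisely the convergence of the $z$-level products $(u^\epsilon z^\epsilon)_x$ and $(v^\epsilon w^\epsilon)_x$ in the sense of association. The primitive $Z^\epsilon$ sits at singularity order $\beta(\epsilon)^{-3}$ in the Hopf--Cole representation, so extracting a bounded measure limit already requires the slower decay of $\beta(\epsilon)$---this is the ``slower growth order on $u$'' referred to in the opening paragraph of this section. Passing to the limit in $(v^\epsilon w^\epsilon)_x$ needs a separate treatment, because $v^\epsilon$ and $w^\epsilon$ concentrate on the same shock curves of $u$ and the product is not defined by weak-$*$ convergence alone; I would rewrite $v^\epsilon w^\epsilon$ as an $x$-derivative of an explicit quantity built from $a,b,c$ and pass to the limit in that primitive. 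The delicate point is to balance the two competing requirements on $\beta(\epsilon)$: slow enough that the moderate estimates and the primitive $BV$ bounds survive, yet fast enough that $\beta(\epsilon)$ times the measure norms of $u^\epsilon_{xx},v^\epsilon_{xx},w^\epsilon_{xx}$ tends to zero, so that the viscous right-hand sides vanish in the sense of association.
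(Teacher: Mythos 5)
Your first half is close in spirit to the paper, but the mechanism there is more specific than your sketch. The paper does not build $z^\epsilon$ by Duhamel; it quotes the Ladyzhenskaya--Solonnikov--Ural'ceva maximum principle (Lemma 4.2) for $L(u)=u_t-\sum a_{ij}u_{x_ix_j}+a_iu_{x_i}+au=f$, applied to $z_t+u^\epsilon z_x+u^\epsilon_x z=\epsilon z_{xx}-(v^\epsilon w^\epsilon)_x$ and then to its successive $x$-derivatives. The decisive quantity is the zeroth-order coefficient $a=u^\epsilon_x=O(\beta(\epsilon)^{-1/2})$, which enters the estimate as $\exp(a_0T)=\exp\bigl(CT/\sqrt{\beta(\epsilon)}\bigr)$; the choice $\beta(\epsilon)\sim\bigl(\log(1/\sqrt{\epsilon})\bigr)^{-2}$ is made precisely so that this exponential is a fixed power of $\epsilon^{-1}$, while the powers $\beta(\epsilon)^{-m}$ coming from $v^\epsilon,w^\epsilon$ are only logarithmic. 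Your Duhamel variant can be made to work \emph{only} if the convection term is absorbed into the fundamental solution, as you propose; note that treating $(u^\epsilon z^\epsilon)_x$ as a source against the heat kernel of $\partial_t-\epsilon\partial_{xx}$ (the route used in the paper's uniqueness proof) would instead produce a Gronwall factor $\exp(CT\sup|u^\epsilon|^2/\epsilon)$, which is not moderate. In any case your proposal never isolates this exponential -- you attribute the constraint on $\beta$ solely to keeping $\beta^{-p}$ subpolynomial -- and that is the one place where the estimate can genuinely fail.

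The second half of your proposal addresses a different, and much harder, statement than the one being proved. In this paper ``macroscopic solution'' means that $L(u,v,w,z)$, with all products computed on the smooth representatives inside the algebra, is associated to $0$. Since $(u^\epsilon,v^\epsilon,w^\epsilon,z^\epsilon)$ solves \eqref{e5.1} exactly, the residual of the fourth equation of \eqref{e1.1} is literally $\epsilon z^\epsilon_{xx}$; two integrations by parts and the bound $\sup|z^\epsilon|=O(\epsilon^{-1/2})$ give $\epsilon\int z^\epsilon\phi_{xx}\,dx\,dt\to 0$, and the first three components are disposed of by citing \cite{j3}. No weak-$*$ compactness, no Helly argument, and no identification of the limits of $(u^\epsilon z^\epsilon)_x$ or $(v^\epsilon w^\epsilon)_x$ is required. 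What you call the ``main obstacle'' -- making sense of those products for limiting measures concentrating on the same curves -- is exactly the determination of the macroscopic \emph{aspect} of the solution, which the paper explicitly leaves open (Sections 5 and 6 treat it only for Riemann data). Since your proposal hinges its conclusion on that unexecuted program, it does not close as written, even though the theorem itself needs far less.
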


To prove the above theorem we need the following lemma whose prove can be found [\cite{laso},Chap. I, Theorem 2.5].

\begin{lemma}
Let $u$ satisfy
\begin{equation}
 L(u)=u_t - \displaystyle{\sum _{ij}}a_{ij}(x,t)u_{x_i x_j}+a_{i}(x,t)u_{x_i}+a(x,t)u=f(x,t),
\end{equation}
 where $u$ is continuous at all point $(x,t)\in R^n \times [0, T]$, has continuous derivative $u_t, u_{x_{i}}$ and $u_{x_{i}x_{j}}$ 
satisfies the equation 
for $0< t \leq T$, is bounded, the moduli of the coefficients 
$a_{ij}$,$a_{i}$ do not exceed $c$ and $a(x,t)\geq -a_0$, where $c$ and $a_0$ are nonnegative constants, then the follwing estimate 
holds.

\begin{equation}
 \displaystyle{\sup_{x\in \mathbb{R}^n,0\leq t \leq T}} \mid u(x,t)\mid \leq 
(\displaystyle{\sup_{x\in \mathbb{R}^n}} \mid u(x,0)\mid+ 
T\displaystyle{\sup_{x\in \mathbb{R}^n,0\leq t \leq T}} \mid f(x,t)\mid)\exp(a_0 T)
\label{e5.2}
\end{equation}
 \end{lemma}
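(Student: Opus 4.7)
The plan is to reduce the problem to the classical weak parabolic maximum principle on bounded cylinders, using two standard devices: an exponential time rescaling to absorb the lower bound $a \geq -a_{0}$, and a mild spatial barrier to compensate for the unboundedness of $\mathbb{R}^{n}$.

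First I would set $w(x,t) = e^{-a_{0} t} u(x,t)$. A direct substitution shows
\begin{equation*}
L'(w) := w_{t} - \sum_{i,j} a_{ij} w_{x_{i} x_{j}} + \sum_{i} a_{i} w_{x_{i}} + (a+a_{0})w = e^{-a_{0} t} f,
\end{equation*}
so the new zero-order coefficient $a+a_{0}$ is non-negative, while $w$ remains continuous and bounded with $|w(x,0)| = |u(x,0)|$. It then suffices to establish $|w(x,t)| \leq M_{0} + tF$, where $M_{0} := \sup_{x}|u(x,0)|$ and $F := \sup|f|$; undoing the rescaling afterwards gives $|u(x,t)| \leq e^{a_{0} t}(M_{0}+tF) \leq (M_{0}+TF)e^{a_{0} T}$, as claimed.

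Next, for each $\delta > 0$ I would introduce the barrier
\begin{equation*}
\psi_{\delta}(x,t) = M_{0} + tF + \delta\bigl((1+|x|^{2})^{1/2} + K t\bigr),
\end{equation*}
with $K = K(c,n)$ to be chosen. Since the first and second derivatives of $(1+|x|^{2})^{1/2}$ are bounded uniformly in $x$, a direct computation using $|a_{ij}|, |a_{i}| \leq c$ together with the non-negativity of $(a+a_{0})\psi_{\delta}$ yields $L'(\psi_{\delta}) \geq F \geq e^{-a_{0} t} f$ once $K$ is taken sufficiently large. Because $\psi_{\delta}(x,0) \geq M_{0} \geq |w(x,0)|$ and $\psi_{\delta}(x,t) \to \infty$ as $|x| \to \infty$ while $w$ stays bounded, for each fixed $\delta$ the function $\psi_{\delta} - w$ is non-negative on the parabolic boundary of the cylinder $B_{R} \times [0,T]$ for every sufficiently large $R$. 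The classical weak maximum principle applied to $L'$ on this bounded cylinder (which is available because $a+a_{0} \geq 0$) then gives $\psi_{\delta} \geq w$ inside, and the symmetric argument applied to $-w$ gives the matching lower bound. Letting $R \to \infty$ and then $\delta \to 0$ produces $|w(x,t)| \leq M_{0} + tF$, which completes the proof after the rescaling is undone.

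The main obstacle is precisely the unboundedness of the spatial domain: the weak maximum principle on bounded cylinders requires control on a parabolic boundary, and there is no lateral boundary at infinity to exploit directly. The role of $\psi_{\delta}$ is to bridge this gap by dominating the bounded unknown at large $|x|$ while being negligible in any fixed region as $\delta \to 0$; the calculation that pins down $K = K(c,n)$ so that $L'(\psi_{\delta})$ stays above the inhomogeneity, despite the drift term $\sum a_{i}\psi_{\delta,x_{i}}$, is the only nontrivial technical step.
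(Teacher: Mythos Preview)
The paper does not supply its own proof of this lemma: it simply refers the reader to Ladyzenskaja--Solonnikov--Ural'ceva, Chapter~I, Theorem~2.5. Your argument is precisely the standard proof one finds in that reference --- the exponential rescaling $w=e^{-a_{0}t}u$ to make the zero-order coefficient non-negative, followed by a sublinear spatial barrier to force the comparison on large cylinders, and the bounded-domain weak maximum principle --- so there is nothing to compare beyond noting that you have reproduced the textbook argument.

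One remark worth recording: both the lemma as stated in the paper and your write-up tacitly rely on the operator being parabolic, i.e.\ on $\sum_{i,j}a_{ij}\xi_{i}\xi_{j}\geq 0$. Without this the weak maximum principle on $B_{R}\times[0,T]$ is not available and the whole scheme collapses; the cited theorem in Ladyzenskaja--Solonnikov--Ural'ceva does carry this hypothesis, so it is an omission in the paper's transcription of the statement rather than a flaw in your reasoning.
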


{\bf {Proof of the theorem:}}\\
\begin{proof}
 
Applying the lemma to the component $z$ and observing that 
\begin{equation*}
(u^\epsilon)_x = \frac{O(1)}{\sqrt{\beta(\epsilon)}},\,\,\,\,
 (\partial_x)^j v=O(\beta(\epsilon)^{k(j)}),\,\,\,\, 
(\partial_x)^j w=O(\beta(\epsilon)^{l(j)})
\end{equation*} 
for some non negative integers $k(j)$ and $l(j)$.
\begin{equation}
\begin{aligned}
 \displaystyle{\sup_{x\in \mathbb{R},0\leq t \leq T}} \mid z(x,t)\mid &\leq 
(\displaystyle{\sup_{x\in \mathbb{R}}} \mid z(x,0)\mid+ 
T\displaystyle{\sup_{x\in \mathbb{R},0\leq t \leq T}} \mid (vw)_{x}\mid)\exp(\frac{O(1)}{\sqrt{\beta(\epsilon)}})\\
&\leq(\displaystyle{\sup_{x\in \mathbb{R}}} \mid z(x,0)\mid+ 
 O((\beta(\epsilon))^m))\exp(\frac{O(1)}{\sqrt{\beta(\epsilon)}}),
\label{e5.3}
\end{aligned}
\end{equation}
for some negative $m$.
Now chose $\beta(\epsilon)= (\frac{O(1)}{log\frac{1}{\sqrt{\epsilon}}})^2$, then 
 $\exp(\frac{a_0 T}{\beta (\epsilon)})= \frac{1}{\sqrt{\epsilon}}$.

So it is clear that $z$ satisfies
\begin{equation}
 \displaystyle{\sup_{x\in \mathbb{R},0\leq t \leq T}} \mid z(x,t)\mid =O(\frac{1}{\sqrt{\epsilon}})
\label{e4.6}
\end{equation}

Differentiating the fourth equation of \eqref{e5.1}, with  with respect to $x$, we get
\begin{equation*}
(z_x)_t + 2u_x z_x +u (z_x)_x+ (v w)_x+u_{xx}z =\epsilon z_{xxx}.
\end{equation*}

Again applying the lemma to the above equation, we get 
\begin{equation*}
 \displaystyle{\sup_{x\in \mathbb{R},0\leq t \leq T}} \mid z_x (x,t)\mid =O(\frac{1}{\epsilon})
\end{equation*}

 Proceeding inductively and using the equation, we get following estimates on all combination of operators $\partial_t$ and $\partial_x$.
\begin{equation*}
 \displaystyle{\sup_{x\in \mathbb{R},0\leq t \leq T}} \mid \partial_t^l\partial_x^m z (x,t)\mid =O(\frac{1}{\epsilon^r}),
\end{equation*}
for some nonnegative integer $r$ depending only on $l$ and $m$.
 So $(z^\epsilon)$  satisfies moderate estimates. Now we show the colombeau class $u,v,w,z$ corresponding to the representative
$( u^\epsilon, v^\epsilon, w^\epsilon, z^\epsilon)$ is a macroscopic solution to the fourth equation of 
\eqref{e1.1}.

multipying with test function $\phi \in (-\infty, \infty)\times(0, \infty)$ in the equation for component $z$ and integrating
over $(-\infty, \infty)\times(0, T)$, we have

\begin{equation}
 \begin{aligned}
 \int_{0}^{\infty}\int_{-\infty}^{\infty} (z_t + (vw + uz)_x )\phi(x,t)dx dt
&=\epsilon\int_{0}^{\infty}\int_{-\infty}^{\infty}z_{xx}\phi(x,t)dx dt\\
\label{e5.4}
\end{aligned}
\end{equation}
Using integration by parts twice in right hand side of the equation \eqref{e5.4}, we have,

\begin{equation}
 \begin{aligned}
 \int_{0}^{\infty}\int_{-\infty}^{\infty} (z_t + (vw + uz)_x )\phi(x,t)dx dt
 =\epsilon\int_{0}^{\infty}\int_{-\infty}^{\infty}z\phi(x,t)_{xx}dx dt.
\label{e5.5}
\end{aligned}
\end{equation}
 By equation \eqref{e4.6}, right hand side of equation \eqref{e5.4} implies 
\begin{equation*}
  \displaystyle{\lim_{\epsilon \rightarrow 0}}\int_{0}^{\infty}\int_{-\infty}^{\infty}( z_t + (vw + uz)_x) \phi(x,t)dx dt
 =0
\end{equation*}

 By the estimate \eqref{e5.4},the above limit tends to zero as $\epsilon$ tends to zero. 

Since  $\epsilon$ tends to zero imply $\beta(\epsilon)$ tends to zero, we get from \cite{j3} that 
$(u,v,w)$ is a solution to the first three equations of \eqref{e1.1} in the sense of association.

Hence $(u,v,w,z)$ $\in$ $\mathcal{G}_{s,g}(\Omega_T)^4$ correspnding to the representative 
$( u^\epsilon, v^\epsilon, w^\epsilon, z^\epsilon)$
is a macroscopic solution to 
the system \eqref{e1.1}. 
\end{proof}

\section{Explicit solution for Riemann type data}
To understand the macroscopic aspect of the solution obtained in section $3$ is an open question for general initial data. 
However for Riemann type data and when $u$ develops shock or contact discontinuity is completely solved in \cite{jm1}.
But for the case when $u$ develops rarefaction it is partly solved. We describe the results obtained there for this case:

For $u_l <u_r$, $v_l=v_r=\bar{v}$, $w_l=w_r=\bar{w}$, $z_l=z_r=\bar{z}$,

\begin{equation}
\displaystyle{\lim_{\epsilon \rightarrow 0}}( w^{\epsilon},z^{\epsilon}) =
\begin{cases}
 (\bar{w},\bar{z}),\,\,\,if,\,\,\, x\leq u_l t\\
(\dfrac{\bar{v} ^2}{2}t\delta_{x=u_lt},\bar{v}\bar{w}t\delta_{x=u_lt}-\dfrac{\bar{v}^3}{6}t^2\delta'_{x=u_lt})
,\,\,\,if,\,\,\,x=u_lt\\
(0,0),\,\,\,if,\,\,\,u_l t<x<u_r t\\
(-\dfrac{\bar{v} ^2}{2}t\delta_{x=u_rt},-\bar{v}\bar{w}t\delta_{x=u_rt}+\dfrac{\bar{v}^3}{6}t^2\delta'_{x=u_rt})
,\,\,\,if,\,\,\,x=u_rt\\
    (\bar{w},\bar{z}),\,\,\,if,\,\,\, x\geq u_r t
\end{cases}
\label{e4.7}
\end{equation}

It is conjectured there that the distributions
\begin{equation}
\begin{aligned}
&(w(x,t),z(x,t))\\&=
 \begin{cases}
 (w_l,z_l), \,\,\,\,\,\, if \,\,\, x<u_l t\\
 (\dfrac{v_l ^2}{2}t\delta_{x=u_l t},v_lw_l t\delta_{x=u_l t}-\dfrac{v_l ^3}{6}t^2\delta'_{x=u_l t}), \,\,\,\,\,\, if \,\,\, x=u_l t\\
 (0,0),  \,\,\,\,\,\, if \,\,\, u_l t<x<u_r t\\
(-\dfrac{v_r ^2}{2}\delta_{x=u_r t},-v_rw_r t\delta_{x=u_r t}+\dfrac{v_r ^3}{6}t^2\delta_{x=u_r t}), \,\,\,\,\,\, if \,\,\, x=u_r t\\
 (w_r,z_r), \,\,\,\,\,\, if \,\,\, x>u_r t.
 \end{cases}
\label{e4.9}
\end{aligned}
\end{equation}
is the macroscopic aspect when $u$ develops rarefaction.

In this section we construct shadow wave solution\cite{ma} and solution using Volpert product \cite{v1} for Riemann type data when
$u$ develops rarefaction($u_l < u_r$). 
   
First we recall some definition from \cite{ma}. We keep our discussions in a general level.

\begin{definition}
 Let $u_{\epsilon}$ and $u_0$ are given by

\begin{equation}
u^\epsilon (x,t)=\begin{cases}
                    
 u_1,\,\,\,\, if,\,\,\,\, x < (c(t)-\epsilon t)\\

u_{1 \epsilon} \,\,\,\, if ,\,\,\,\,(c(t)-\epsilon t) < x < c(t)\\ 

u_{2 \epsilon} \,\,\,\, if ,\,\,\,\, (c(t)-\epsilon t) < x < (c(t)+\epsilon t)\\ 
 u_1,\,\,\,\, if,\,\,\,\, x > (c(t)+\epsilon t),        
 \end{cases}
\end{equation}

\begin{equation}
u_0 (x)= \begin{cases}
     u_1,\,\,\,\, if,\,\,\,\, x < 0\\ 
u_2,\,\,\,\, if,\,\,\,\, x >0 ,  
        \end{cases}
\end{equation}
where $u_1, u_2, u_{1 \epsilon}$ and $ u_{2 \epsilon}$ are constants and are in ${\mathbb {R}}^n$,
  $(x,t) \in R\times (0,\infty)$ and $f:{\mathbb{R}}^n\rightarrow \mathbb{R}$ is smooth. The line $x=c(t)$ has its
initial point at origin.
Let the distributional limit of $u^{\epsilon}(x,t)$ exists and is $u$.
If $(u^{\epsilon})_t +f(u^\epsilon)_x$  tends to 0, 
in the sense of distribution. 
Then we say $u$ is a Shadow wave solution to the conservation law 
\begin{equation*} 
 u_t +f(u)_x=0
\end{equation*}
with initial data
\begin{equation*} 
 u(x,0)= u_0 (x).
\end{equation*}
\end{definition}
Also there is an entropy concept for this, see \cite{ma}, page[500].
\begin{definition}
Let $\eta(u)$ be a convex entropy with the enropy flux $q(u)$. Then $u^{\epsilon}$ is said to be \emph{entropy admissible}
if 
\begin{equation}
 \displaystyle{\liminf_{\epsilon \rightarrow 0}} \int_\mathbb{R} \int_0 ^T \eta(u^\epsilon)\partial_t \phi dx dt +
\int_\mathbb{R} \eta(u^\epsilon (x,0))dx \geq 0
\end{equation}
for all non-negative test functions $\phi \in C_0 ^\infty (\mathbb{R}\in (\infty, T)$).
\end{definition}

The above definition is equivalent to :
\begin{equation}
 \begin{aligned}
 & \displaystyle{\limsup_{\epsilon \rightarrow 0}} -c(\eta(u_2)-\eta(u_1))+\epsilon (\eta(u_{1\epsilon})+ \eta(u_{1\epsilon}))
+ q(u_2)-q(u_1) \leq 0\\
& \displaystyle{\lim_{\epsilon \rightarrow 0}} -c\epsilon (\eta(u_{1\epsilon})+ \eta(u_{1\epsilon}))+
\epsilon (q(u_{1\epsilon})+ q(u_{1\epsilon}))=0\\
 \end{aligned}
\label{def2}
\end{equation}

Now we construct shadow wave solution in the following theorem.
\begin{theorem}
 If $u_l < u_r$, then a shadow wave solution to the equation \eqref{e1.2} with initial data
\begin{equation*}
( u(x,0),v(x,0),w(x,0),z(x,0))=\begin{cases}
 (u_l, v_l,w_l, z_l),\,\,\,\, if,\,\,\,\, x < 0\\
(u_r, v_r,w_r, z_r),\,\,\,\, if,\,\,\,\, x > 0
                             \end{cases}
\end{equation*}
 is given by
\begin{equation}
(w,z)=\begin{cases}
                    
 ( w_l, z_l),\,\,\,\, if,\,\,\,\, x < u_l t\\
 
\frac{v_l ^2}{2}t \delta _{x=u_l t}, v_l w_l t  \delta _{x=u_l t},if,\,\,\,\, x = u_l t \\

 (0,0) \,\,\,\, if ,\,\,\,\,u_l t < x < u_r t\\ 
-\frac{v_r ^2}{2}t \delta _{x=u_r t}, - v_r w_r t  \delta _{x=u_r t},\,\,\,\,if,\,\,\,\, x = u_r t \\
(w_r, z_r),\,\,\,\, if,\,\,\,\, x > u_r t
\end{cases}
\end{equation}
The Solution is entropy admissible.
\end{theorem}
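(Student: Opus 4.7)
The plan is to build a piecewise-constant shadow wave ansatz that concentrates all singular mass into two thin layers of width $2\epsilon t$ centered on the lines $x=u_l t$ and $x=u_r t$, while leaving the classical rarefaction of $u$ essentially unchanged. Outside the layers I take $u^\epsilon=u_l$ on the far left, a piecewise-constant stair approximation of $x/t$ in the open fan, and $u^\epsilon=u_r$ on the far right; $v^\epsilon$ equals $v_l$, $0$, $v_r$ respectively in those three regions, and $(w^\epsilon,z^\epsilon)$ equals $(w_l,z_l)$, $(0,0)$, $(w_r,z_r)$. Inside each layer I assign constant values to be determined so that the weak limit matches \eqref{e4.7}: the $w$-height in the left layer is forced to be $\tfrac{v_l^2}{4\epsilon}$ so that the layer's contribution converges to $\tfrac{v_l^2}{2}t\,\delta_{x=u_l t}$, the $z$-height is $\tfrac{v_l w_l}{2\epsilon}$, and the right layer is fixed symmetrically with opposite signs.

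Next I would substitute the ansatz into \eqref{e1.1}, test against an arbitrary $\phi\in C_c^\infty$, and organize the resulting jump contributions by powers of $\epsilon$. The $O(1)$ balances fix the layer speeds to $u_l$ and $u_r$; the balance of $(v^2/2)_x$ against $\partial_t w$ at the left layer then forces exactly the height $v_l^2/(4\epsilon)$ assigned above, and similarly at the right layer with $v_r$; the analogous balance of $(vw)_x$ against $\partial_t z$ fixes the $z$-heights; everything else, including the transport-by-$u$ contributions and the derivatives of $u^\epsilon$ inside the open fan, is $O(\epsilon)$ and distributionally negligible. Entropy admissibility follows by applying \eqref{def2} to a convex entropy for the scalar $u$-equation (a standard Lax inequality at a characteristic speed, since each thin layer moves with speed $u_l$ or $u_r$) together with entropies for $v,w,z$ chosen to grow at most linearly at infinity, which makes the term $\epsilon\,\eta(h_\epsilon)$ with $h_\epsilon=O(1/\epsilon)$ vanish in the limit.

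\emph{The main obstacle} I expect is the multiplication inside the thin layer, where $u^\epsilon$ is of order $1$ but $w^\epsilon$ and $z^\epsilon$ are of order $1/\epsilon$: one must verify that the product $u^\epsilon w^\epsilon$ integrated against $\phi$ over a width-$2\epsilon t$ strip produces precisely the contribution that cancels the $\partial_t$-generated delta in $w$ and balances the forcing from $(v^2/2)_x$, with the correct signs and the correct linear-in-$t$ factor at both $x=u_l t$ and $x=u_r t$ simultaneously. The same delicate bookkeeping reappears for the $(uz)_x$ and $(vw)_x$ couplings in the fourth equation, and getting the signs right is what ultimately distinguishes this shadow construction from the $\delta'$-flavored Volpert-product solution that appears in \eqref{e4.9}.
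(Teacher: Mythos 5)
Your construction of the approximating family is essentially the paper's: one attaches thin layers travelling with speeds $u_l$ and $u_r$ along the edges of the rarefaction fan, gives $w^\epsilon$ and $z^\epsilon$ heights of order $1/\epsilon$ there, balances the $\delta$ and $\delta'$ coefficients produced by $\partial_t$ against those produced by the flux terms, and then computes the distributional limit; the choice of symmetric width-$2\epsilon t$ layers versus the paper's one-sided width-$\epsilon t$ layers is immaterial. One small omission in the construction: the paper's ansatz allows $v^\epsilon$ to carry an intermediate value of order $1/\sqrt{\epsilon}$ in the layers and then \emph{derives} that its coefficient must vanish ($v_1=v_2=0$) from the $\delta'$ balance in the $w$-equation; this is what guarantees that the $(vw)_x$ term in the $z$-equation, which would otherwise be of order $\epsilon^{-3/2}$ over a width-$\epsilon$ strip, stays under control. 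You simply posit that $v^\epsilon$ is $O(1)$ in the layers, which happens to be consistent, but you should at least check that the $v$-equation is satisfied across the layer (it is, since the jump of $v$ from $v_l$ to $0$ travels at speed $u_l$ with flux $uv$ and $u=u_l$ there).

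The genuine gap is in the entropy admissibility argument. You claim that choosing entropies for $v,w,z$ ``that grow at most linearly at infinity'' makes $\epsilon\,\eta(h_\epsilon)$ with $h_\epsilon=O(1/\epsilon)$ vanish in the limit. It does not: if $\eta$ grows linearly then $\epsilon\,\eta(h_\epsilon)=O(1)$, and a convex function of $v,w,z$ with strictly sublinear growth in both directions is necessarily constant in those variables, so there is no room to make these terms individually small. The correct mechanism, which is what the paper uses (via Panov's classification of convex entropies for the prolonged system after the linear change of variables $(u,v,w,z)\mapsto(2u,v,4w,24z)$), is that every convex entropy has the form $\bar{\eta}(u)+c_1v+c_2w+c_3z$ with flux $\int u\bar{\eta}'(u)\,du+c_1uv+c_2(\frac{v^2}{2}+uw)+c_3(vw+uz)$; the $O(1)$ contributions $\epsilon\,\eta(u_{1\epsilon})$ and $\epsilon\,q(u_{1\epsilon})$ in \eqref{def2} then \emph{cancel} each other precisely because the layer speed equals $u_l$ (resp.\ $u_r$) and the linear part of $q$ is $u$ times the linear part of $\eta$, while the remaining entropy production is a linear combination of the four residuals $u^\epsilon_t+(\frac{(u^\epsilon)^2}{2})_x$, etc., which tend to zero in distributions. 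This reduces the entropy inequality to the scalar Lax condition for $u$ alone, which holds for the rarefaction. Without identifying that the admissible entropies are exactly linear in $v,w,z$ and exploiting this cancellation, your admissibility claim is unsupported.
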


\begin{proof}

By vanishing viscosity limit, see \cite{jm1} the limit $(u,v)$ for the rarfaction case of $u$ is 
given by

\begin{equation}
(u,v)=\begin{cases}
                    
 (u_l, v_l),\,\,\,\, if,\,\,\,\, x < u_l t\\

 (\frac{x}{t},0) \,\,\,\, if ,\,\,\,\,u_l t < x < u_r t\\ 
(u_r, v_r),\,\,\,\, if,\,\,\,\, x > u_r t
\end{cases}
\end{equation}
 So we guess the following ansatz for $(u,v,w, z)$ for the possible shadow wave approximation.
\begin{equation}
(u_{\epsilon},v_{\epsilon},w_{\epsilon},z_{\epsilon})(x,t)=\begin{cases}
                    
 (u_l,v_l,w_l, z_l),,\,\,\,\, if,\,\,\,\, x < (u_l-\epsilon) t\\
(u_l,\frac{v_1}{\sqrt{\epsilon}},\frac{w_1}{\epsilon},\frac{z_1}{\epsilon}) 
\,\,\,\, if ,\,\,\,\, (u_l-\epsilon) t < x < u_l t\\ 

(\frac{x}{t},0,0,0) \,\,\,\, if ,\,\,\,\, u_l t < x < u_r t\\ 

(u_r,\frac{v_2}{\sqrt{\epsilon}},\frac{w_2}{\epsilon},\frac{z_2}{\epsilon}) 
\,\,\,\, if ,\,\,\,\,   u_r t < x <(u_r+\epsilon) t \\ 
(u_r,v_r,w_r, z_r),,\,\,\,\, if,\,\,\,\, x > (u_r+\epsilon) t
\end{cases}
\end{equation}
Applying formula $3.2$ from \cite{ma}, with $a_\epsilon=\epsilon$, $b_\epsilon=0$, $c=u_l$ 
near the discontinuity line $x=u_lt$ and $a_\epsilon=0$, $b_\epsilon=\epsilon$, $c=u_r$ near
the discontinuity line $x=u_rt$, we get
  
\begin{equation}
\begin{aligned}
 w_t & \approx (u_l w_l +w_1)\delta_{x=u_l t}-u_l w_1 t \delta'_{x=u_l t}\\&+ (-u_r w_r+w_2)\delta_{x=u_r t}
-u_r w_2 t \delta'_{x=u_r t}
\end{aligned}
\end{equation}
\begin{equation}
\begin{aligned}
 \partial_x (\frac{{v^\epsilon}^2}{2}+u^\epsilon w^\epsilon)&\approx
(-\frac{v_l ^2}{2}-u_l w_l)\delta_{x=u_l t}+(-\frac{v_1 ^2}{2}+u_l w_1)t\delta'_{x=u_l t}\\
&+(-\frac{v_r ^2}{2}-u_r w_r)\delta_{x=u_r t}+(-\frac{v_2 ^2}{2}+u_r w_2)t\delta'_{x=u_r t}
\end{aligned}
\end{equation}

The relation  $w_t+\partial_x (\frac{{v^\epsilon}^2}{2}+u^\epsilon w^\epsilon)\approx 0$ implies
 
\begin{equation*}
 v_1=v_2=0, \,\,\,\, w_1= \frac{v_l ^2}{2},\,\,\,\, w_2= -\frac{v_r ^2}{2}.
\end{equation*}
 
 Now we calculate the distributional limit of $w^{\epsilon}$. Let $\phi$ be a real valued test function supported in 
$(-\infty, \infty) \times (0, \infty)$.
\begin{equation}
\begin{aligned}
 &\int_{0}^\infty \int_{-\infty}^\infty w^{\epsilon}(x,t)\phi(x,t)dx dt\\&=
 \int_{0}^\infty \int_{-\infty}^{(u_l-\epsilon)t} w^{\epsilon}(x,t)\phi(x,t)dx dt+
\int_{0}^\infty \int_{(u_l-\epsilon)t}^{u_l t} w^{\epsilon}(x,t)\phi(x,t)dx dt\\&+
\int_{0}^\infty \int_{(u_r)t}^{(u_r+\epsilon)t} w^{\epsilon}(x,t)\phi(x,t)dx dt+
\int_{0}^\infty \int_{(u_r+\epsilon)t}^{\infty} w^{\epsilon}(x,t)\phi(x,t)dx dt\\
&= \int_{0}^\infty \int_{-\infty}^{(u_l-\epsilon)t} w_l\phi(x,t)dx dt+
\int_{0}^\infty \int_{(u_l-\epsilon)t}^{u_l t} \frac{v_l ^2}{2 \epsilon}\phi(x,t)dx dt\\&-
\int_{0}^\infty \int_{(u_r)t}^{(u_r+\epsilon)t}\frac{v_l ^2}{2 \epsilon} \phi(x,t)dx dt+
\int_{0}^\infty \int_{(u_r+\epsilon)t}^{\infty} w_r \phi(x,t)dx dt
\end{aligned}
\end{equation}

As $\epsilon$ tends to 0, we have 
\begin{equation}
\begin{aligned}
 &\displaystyle {\lim_{\epsilon\rightarrow 0}}\int_{0}^\infty \int_{-\infty}^\infty w^{\epsilon}(x,t)\phi(x,t)dx dt\\&=
 \int_{0}^\infty \int_{-\infty}^{u_l t} w_l\phi(x,t)dx dt+
\int_{0}^\infty  \frac{v_l ^2}{2 }\phi(u_l t,t) dt\\&-
\int_{0}^\infty  \frac{v_r ^2}{2}\phi(u_r t,t) dt+
\int_{0}^\infty \int_{u_r t}^{\infty} w_r \phi(x,t)dx dt\\
&= \int_{0}^\infty \int_{-\infty}^\infty w(x,t)\phi(x,t)dx dt
\end{aligned}
\end{equation}
Proceeding as above we get,
\begin{equation}
\begin{aligned}
 z_t & \approx (u_l z_l +z_1)\delta_{x=u_l t}-u_l z_1 t \delta'_{x=u_l t}\\&+ (-u_r z_r+z_2)\delta_{x=u_r t}
-u_r z_2 t \delta'_{x=u_r t}
\end{aligned}
\end{equation}
\begin{equation}
\begin{aligned}
 \partial_x (v^\epsilon w^\epsilon+u^\epsilon z^\epsilon)&\approx
(-v_l w_l-u_l z_l)\delta_{x=u_l t}+\epsilon 
(-\frac{v_1 w_1}{\epsilon^{\frac{3}{2}}}+\frac{u_l z_1}{\epsilon})t\delta'_{x=u_l t}\\
&+(-v_r w_r-u_r z_r)\delta_{x=u_r t}+\epsilon 
(-\frac{v_2 w_2}{\epsilon^{\frac{3}{2}}}+\frac{u_r z_2}{\epsilon})t\delta'_{x=u_r t}\\
\end{aligned}
\end{equation}
From the calculation for $w$, we had $v_1=v_2=0$, So
\begin{equation*}
\begin{aligned}
 \partial_x (v^\epsilon w^\epsilon+u^\epsilon z^\epsilon)&\approx
(-v_l w_l-u_l z_l)\delta_{x=u_l t}+ 
u_l z_1 t\delta'_{x=u_l t}\\
&+(v_r w_r+u_r z_r)\delta_{x=u_r t}+
u_r z_2 t\delta'_{x=u_r t}
\end{aligned}
\end{equation*}

The relation  $z_t+\partial_x (v^\epsilon w^\epsilon+u^\epsilon z^\epsilon)\approx 0$ implies
 
\begin{equation*}
  z_1= v_l w_l,\,\,\,\, z_2= -v_r w_r.
\end{equation*}

 Following the calculation as in $w^\epsilon$, we get
\begin{equation}
 \displaystyle {\lim_{\epsilon\rightarrow 0}}\int_{0}^\infty \int_{-\infty}^\infty z^{\epsilon}(x,t)\phi(x,t)dx dt=
\int_{0}^\infty \int_{-\infty}^\infty z(x,t)\phi(x,t)dx dt.
\end{equation}
To show the solution is \emph{entropy admissible}, we first determine enropy and entropy flux pair for the system \eqref{e1.2}.

Note that if we take the transformation $(u,v,w,z) \rightarrow (2u, v,4w, 24z)$, 
the system \eqref{e1.2} transforms to prolonged system $n=4$
with $f(u)= u^2$, see \cite{p}. Convex entropy for such a system is given by, see \cite{p}.
\begin{equation*}
\eta(u)=\bar{\eta}(u)+c_1 v + c_2 w + c_3 z,
\end{equation*}
where $\eta(u)$ is convex.

Since the transformation $(u,v,w,z) \rightarrow (2u, v,4w, 24z)$ is linear, so convex entropy and flux of the \eqref{e1.2} is :
\begin{equation*}
\begin{aligned}
\eta(u)&=\bar{\eta}(u)+c_1 v + c_2 w + c_3 z\\
q&= \int u\eta'(u)u du +c_1 uv +c_2(\frac{v^2}{2}+uw)+c_3(vw+uz)
\end{aligned}
\end{equation*}
Since 
$u^\epsilon _t+u^\epsilon u^\epsilon_x$, $v^\epsilon_t +(u^\epsilon v^\epsilon)_x$,
$w^\epsilon _t +(\frac{{v^\epsilon}^2}{2}+u^\epsilon w^\epsilon)_x$ and 
$z^\epsilon_t +(v^\epsilon w^\epsilon+u^\epsilon z^\epsilon)_x$ tends to $0$ in the sense of distribution. 

So the entropy condition \eqref{def2} reduces to the usual entropy condition for the first component $u$. 
This completes the proof of the theorem.
 \end{proof}

Now we construct a solution to the problem \eqref{e1.1} using volpert product for the component $w$ and $z$.

\begin{theorem}
 Under volpert product consideration, the solution for the component w and z of the
equation \eqref{e1.1}, when $u$ develops rarfaction with initial data  
\begin{equation*}
( u(x,0),v(x,0),w(x,0),z(x,0))=\begin{cases}
 (u_l, v_l,w_l, z_l),\,\,\,\, if,\,\,\,\, x < 0\\
(u_r, v_r,w_r, z_r),\,\,\,\, if,\,\,\,\, x > 0
                             \end{cases}
\end{equation*}
is given by,

\begin{equation}
 (w,z)=\begin{cases}
                    
 ( w_l, z_l),\,\,\,\, if,\,\,\,\, x < u_l t\\
 
\frac{v_l ^2}{2}t \delta _{x=u_l t}, v_l w_l t  \delta _{x=u_l t}-
(\frac{v_l ^3}{6}+ct^{\frac{1}{2}})\delta'_{x=u_l t},if,\,\,\,\, x = u_l t \\

 (0,0) \,\,\,\, if ,\,\,\,\,u_l t < x < u_r t\\ 
-\frac{v_r ^2}{2}t \delta _{x=u_r t}, - v_r w_r t  \delta _{x=u_r t}+
(\frac{v_r ^3}{6}+ct^{\frac{1}{2}})\delta'_{x=u_r t},\,\,\,\,if,\,\,\,\, x = u_r t \\
(w_r, z_r),\,\,\,\, if,\,\,\,\, x > u_r t,
\end{cases}
\label{e3.13*}
\end{equation}
for arbitary real number $c$. Here $\delta' = \frac{\partial}{\partial x}$.
\end{theorem}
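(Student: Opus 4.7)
With $w$ already constructed in the preceding shadow-wave theorem, my plan is to verify directly that the claimed $z$ satisfies $z_t+(vw+uz)_x=0$ distributionally, with every product interpreted via the Volpert product. Introducing unknown coefficients, I take the ansatz
\begin{equation*}
z=z_l\mathbf 1_{x<u_l t}+z_r\mathbf 1_{x>u_r t}+\alpha(t)\delta_{x=u_l t}+\beta(t)\delta'_{x=u_l t}+\gamma(t)\delta_{x=u_r t}+\mu(t)\delta'_{x=u_r t},
\end{equation*}
and plug it into the PDE. The time derivative $z_t$ produces $\delta,\delta',\delta''$-contributions through the identity $\partial_t\delta(x-ct)=-c\,\delta'(x-ct)$ (and an analogous identity applied to $\delta'$); the task is to show that the fluxes $(vw)_x$ and $(uz)_x$ combine to cancel all of these terms.

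For $vw$: across $x=u_l t$ the function $v$ jumps from $v_l$ to $0$, so its Volpert average on the line is $v_l/2$; multiplying by the delta $\tfrac{v_l^2}{2}t\,\delta_{x=u_l t}$ of $w$ gives $\tfrac{v_l^3}{4}t\,\delta_{x=u_l t}$, and symmetrically at $x=u_r t$. For $uz$: $u$ is continuous across both characteristics (with value $u_l$ on $x=u_l t$), while $u_x$ jumps there from $0$ to $1/t$. Therefore $u\cdot\delta_{x=u_l t}=u_l\delta_{x=u_l t}$, and Volpert's rule for a delta-prime against a function whose derivative has a jump gives $u\cdot\delta'_{x=u_l t}=u_l\delta'_{x=u_l t}-\tfrac{1}{2t}\delta_{x=u_l t}$ (the symmetric average of $u_x$ being $\tfrac{1}{2t}$). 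After taking $x$-derivatives, the $\delta''$-contributions in $z_t$ and $(uz)_x$ cancel exactly because $u$ is continuous across each line.

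Matching the remaining distributional coefficients line by line at $x=u_l t$, the $\delta$-coefficient equation reads $\alpha'(t)-v_l w_l=0$, giving $\alpha(t)=v_l w_l\,t$, while the $\delta'$-coefficient equation becomes the linear ODE
\begin{equation*}
\beta'(t)-\frac{\beta(t)}{2t}=-\frac{v_l^3}{4}\,t,
\end{equation*}
which through the integrating factor $t^{-1/2}$ integrates to $\beta(t)=-\tfrac{v_l^3}{6}\,t^2+c\,\sqrt t$. The parallel computation at $x=u_r t$ gives $\gamma(t)=-v_r w_r\,t$ and $\mu(t)=\tfrac{v_r^3}{6}\,t^2+c\,\sqrt t$. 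Reassembling reproduces the stated distribution, with the arbitrary real number $c$ appearing precisely as the homogeneous solution of the first-order $\beta$-ODE.

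The main obstacle is the careful Volpert evaluation of $u\cdot\delta'_{x=u_l t}$, since here $u$ is continuous but $u_x$ has a genuine jump across the line. I would justify the symmetric-average convention by mollifying $u$ in $x$, taking the classical product against $\delta'_{x=u_l t}$, and passing to the limit, then verifying that the value $\tfrac{1}{2t}$ for $\overline{u_x}$ is independent of the (even) mollifier. Once this point is fixed, the rest of the argument is a careful but routine bookkeeping of distributional coefficients; the emergence of the free parameter $c$ then follows automatically from the structure of the linear first-order ODE for $\beta$.
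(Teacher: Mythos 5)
Your proposal follows essentially the same route as the paper's own proof: the same Heaviside-plus-$\delta$-plus-$\delta'$ ansatz along the two characteristics, the same Volpert evaluations ($\overline{v}=v_l/2$ against the $\delta$ in $w$, and $u\cdot\delta'=u_l\delta'-\tfrac{1}{2t}\delta$ from the jump of $u_x$ across the rarefaction edge), and the same coefficient-matching leading to the ODE $h'-\tfrac{h}{2t}=\mp\tfrac{v^3}{4}t$ whose homogeneous solution $c\,t^{1/2}$ produces the free constant. It is correct; if anything, your sign bookkeeping for the $\delta'$-coefficient is more consistent with the stated formula than the paper's own displayed ODE, which carries a sign slip.
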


\begin{proof}
 Lets take the following ansatz for $w$ and $z$:
\begin{equation*}
\begin{aligned}
 w(x,t)&= w_l H(u_l t- x)+ w_r (1-H(u_r t-x))+e_l (t)\delta_{x=u_lt} +e_r (t)\delta_{x=u_rt}\\
z(x,t)&=z_l H(u_l t- x)+ z_r (1-H(u_r t-x))+g_l (t)\delta_{x=u_lt} +g_r (t)\delta_{x=u_rt}\\
&+ h_l (t)\delta'_{x=u_rt} +h_r (t)\delta'_{x=u_rt}
\end{aligned}
\end{equation*}
 Note that,
\begin{equation}
\begin{aligned}
\frac{\partial}{\partial t} (a(t)\delta_{x=c t})&=a'(t)\delta_{x=c t}-c a(t) \delta'_{x=c t}\\
\frac{\partial}{\partial t} (a(t)\delta'_{x=c t})&= a'(t)\delta_{x=c t}-c a(t) \delta''_{x=c t}
\label{e3.13}
\end{aligned}
\end{equation}

we get
\begin{equation*}
\begin{aligned}
 \frac{\partial}{\partial t} w &= u_l w_l \delta_{x=u_lt} - u_r w_r \delta_{x=u_lt} \\
&+ e_l '(t)\delta_{x=u_l t}-u_l e_l (t) \delta'_{x=u_l t}+e_r '(t)\delta_{x=u_r t}-u_r e_r (t) \delta'_{x=u_r t}
\end{aligned}
\end{equation*}

\begin{equation*}
\frac{\partial}{\partial x} (uw) =- u_l w_l \delta_{x=u_lt} + u_r w_r \delta_{x=u_lt} \\
+ u_l e_l (t) \delta'_{x=u_l t}+u_r e_r (t) \delta'_{x=u_r t}
\end{equation*}
\begin{equation*}
 \frac{\partial}{\partial x} (\frac{v^2}{2}) = -\frac{v_l ^2}{2} \delta_{x=u_lt}+\frac{v_r ^2}{2} \delta_{x=u_rt}
\end{equation*}
Putting all these in third equation of \eqref{e1.1}, we get, 
\begin{equation*}
e_l '(t)=\frac{v_l ^2}{2},\,\,\,\, e_r '(t)=-\frac{v_r ^2}{2}
\end{equation*}
Since at time $t=0$, there is no concentration, we take $e_l (0)=0$, $e_r (0)=0$. 
 So we get 
\begin{equation*}
e_l (t)=\frac{v_l ^2}{2}t,\,\,\,\, e_r (t)=-\frac{v_r ^2}{2}t
\end{equation*} 
Now we calculate for the component $z$:

Using \eqref{e3.13},

\begin{equation*}
\begin{aligned}
 \frac{\partial}{\partial t} z &= u_l z_l \delta_{x=u_lt} - u_r z_r \delta_{x=u_lt} \\
&+ g_l '(t)\delta_{x=u_l t}-u_l g_l (t) \delta'_{x=u_l t}+g_r '(t)\delta_{x=u_r t}- u_r g_r  (t) \delta'_{x=u_r t}\\
&+h_l '(t)\delta'_{x=u_l t}-u_l h_l (t) \delta''_{x=u_l t}+h_r '(t)\delta'_{x=u_r t}-u_r h_r (t) \delta''_{x=u_r t},
\end{aligned}
\end{equation*}

\begin{equation*}
\begin{aligned}
\frac{\partial}{\partial x} (uz)& =- u_l z_l \delta_{x=u_lt} + u_r z_r \delta_{x=u_lt} \\
&+ u_l g_l (t) \delta'_{x=u_l t}-\frac{h_l (t)}{2t}\delta'_{x=u_lt}+u_r g_r (t) \delta'_{x=u_r t}\\
&+u_l h_l (t) \delta''_{x=u_l t}-\frac{h_r (t)}{2t}\delta'_{x=u_rt}+u_r h_r (t) \delta''_{x=u_r t}
\end{aligned}
\end{equation*}
Using Volpert product \cite{v1},
\begin{equation*}
\frac{\partial}{\partial x} (vw) =- v_l w_l \delta_{x=u_lt} + v_r w_r \delta_{x=u_rt} 
+ \frac{v_l^3}{4} \delta'_{x=u_lt} -\frac{v_r^3}{4} \delta'_{x=u_rt}
\end{equation*}
Putting all these in fourth equation of \eqref{e1.1}, we get

\begin{equation*}
g_l '(t)=v_l w_l,\,\,\,\, g_r '(t)=-v_r w_r,\,\,\,\,
h_l '(t)-\frac{h_l (t)}{2t}=\frac{v_l ^3}{4}t,\,\,\,\,h_r '(t)-\frac{h_r (t)}{2t}=\frac{v_r ^3}{4}t
\end{equation*}
Since at time $t=0$, there is no concentration, we take $g_l (0)=g_r (0)=h_l (0)=h_r (0)=0$ 
 So we get 
\begin{equation*}
g_l (t)=v_l w_l t,\,\,\,\,g_r (t)=v_r w_r t,\,\,\,\,
h_l (t)=\frac{v_l ^3}{6}t^2+ct^{\frac{1}{2}},\,\,\,\, h_r (t)=-\frac{v_r ^3}{6}t^2+ct^{\frac{1}{2}}
\end{equation*} 
\end{proof}
\section{Conclusion}

 For $u_l< u_r, v_l=v_r, w_l=w_r, z_l=z_r$, \eqref{e3.13*} becomes
\begin{equation*}
\begin{aligned}
 w(x,t)&= \bar{w} H(u_l t- x)+ \bar{w}(1-H(u_r t-x))+\delta_{x=u_lt} +\frac{\bar{v} ^2}{2}t\delta_{x=u_lt}-
\frac{\bar{v}^2}{2}t\delta_{x=u_rt}\\
z(x,t)&=\bar{z} H(u_l t- x)+ \bar{z} (1-H(u_r t-x))+\bar{v}\bar{w} t\delta_{x=u_lt} -\bar{v}\bar{w} t\delta_{x=u_rt}\\
&- (\frac{\bar{v} ^3}{6}t^2+ct^{\frac{1}{2}})\delta'_{x=u_lt} +(\frac{\bar{v} ^3}{6}t^2+ct^{\frac{1}{2}})\delta'_{x=u_rt}
\end{aligned}
\end{equation*}
Under Volpert product consideration the solution is not unique due to arbitary $c$. 
When $c=0$, \eqref{e3.13*} is exactly the vanishing viscosity limit \eqref{e4.7} which is obtained in \cite{jm1}. 
The shadow wave solution for the component $w$ agrees with the vanshing viscosity limit where as it does not agree for the component $z$. 
Macroscopic aspect of the vanishing viscosity approximation is still an open question for the general type 
initial data.


\begin{thebibliography}{0}
\bibitem{co1}
 J.D.Cole, 
\emph{On a quasi-linear parabolic equation occurring in aerodynamics},
 Quart. Appl. Math,{\bf {9}} (1951) 225-236.

\bibitem{co2}J.F. Colombeau,
\emph{New Generalized Functions and Multiplication of Distributions},
 Amsterdam:North Holland (1984).

\bibitem{co3} J.F. Colombeau, 
\emph{New Generalized Functions and Multiplication of Distributions:A 
graduate course, 
 application to theoretical and numerical solutions of partial differential equations},
 (Lyon)(1993).

\bibitem{co4} J.F. Colombeau and A. Heibig, 
 \emph{Generalized solutions to Cauchy problems}, 
  {Monatsh.math.}, {\bf{117}} (1994), 33-49. 

\bibitem{ma} M. Nedeljov,
\emph {Shadow waves: Entropies and Interactions for delta and singular solutions}
{Arch.Rational Mech.Anal.},{\bf{197}}(2010),489-537.

 
\bibitem{17} J. Glimm,
 \emph{Solution in the large for nonlinear hyperbolic system of equations},
 comm. pure Appl Math.{\bf18}(1965), 697-715.  


%

\bibitem{j1}
 K. T. Joseph, 
\emph{A Riemann problem whose viscosity solution
contain $\delta$- measures.}, Asym. Anal., {\bf{7}} (1993), 105-120 .

\bibitem{j2}
 K. T. Joseph and A. S. Vasudeva Murthy, 
\emph{Hopf-Cole transformation to
some systems of partial differential equations}, NoDEA Nonlinear Diff. Eq. Appl., {\bf{8}} (2001), 173-193 .

\bibitem{j3}
 K.T.Joseph, 
\emph{Explicit generalized solutions to a system of 
conservation laws}, 
Proc. Indian Acad. Sci. Math. {\bf {109}} (1999), 401-409.


\bibitem{jm1}
K.T.Joseph and Manas R. Sahoo,
\emph{Vanishing viscosity approach to a system of conservation laws admitting $\delta''$},
 Commun.pure.Appl.Anal., {\bf{12}} (2013), no. 5, 2091-2118.





\bibitem{ob1} M. Oberguggenberger,
\emph{Multiplication of distributions and Applications to PDEs},
Pittman Research Notes in Math,Longman, Harlow {\bf 259} (1992).

\bibitem{ob2} H.A. Biagioni and M. Oberguggenberger,
\emph {Generalized solutions to Burgers equation},
 J. Differential equations, {\bf{97}} (1992), 263-287.

\bibitem{ob3} M. Grosser, M. Kunzinger, M. Oberguggenberger, S. Roland,
\emph{Geometric theory of generalized functions with applications to general relativity}
Mathematics and its applications, 537. Kluwer Academic Publishers, Dordrecht, (2001).

\bibitem{laso} O. A. Ladyzenskaja, V. A. Solonnikov and N. N. Ural'ceva, 
\emph {Linear and quasilinear equations of parabolic type}
in “Translations Math. Monographs,” Vol. 23, Amer. Math.
Sot., Providence, RI, 1968.



\bibitem{v1}
A.I.Volpert,
\emph{The space BV and quasi-linear equations},  Math. USSR 
Sb., {\bf 2} (1967), 225-267.

\bibitem{h1}
E. Hopf, The partial differential equation $u_t +u u_x =\mu u_{xx}$,
Comm. Pure Appl. Math., {\bf 13} (1950) 201-230.

\bibitem{la1}
P.D. Lax, Hyperbolic systems of conservation laws II, Comm.Pure 
Appl. Math., {\bf 10} (1957) 537-566.

\bibitem{w1}
D.H.Weinberg and J.E.Gunn, Large scale structure and the adhesion
approximation, {\it Mon. Not. R. Astr. Soc.}, {\bf 247} (1990), 260-286.

\bibitem{p} E. Yu. Panov, On a representation of the prolonged systems for a scalar conservation law and 
on higher order entropies, Differential equations {\bf{44}} (2008), 1694-1699.
\end{thebibliography}
 \end{document}